\font\smallit=cmti10
\font\smalltt=cmtt10
\font\smallrm=cmr9
\theoremstyle{plain}
\newtheorem{theorem}{Theorem}
\newtheorem{corollary}{Corollary}
\newtheorem{proposition}{Proposition}
\theoremstyle{example}
\theoremstyle{definition}
\theoremstyle{remark}
\numberwithin{equation}{section}
\newdimen\plusheight
\def\+{\;\lower\plusheight\hbox{$+$}\;}
\newdimen\minusheight
\def\-{\;\lower\minusheight\hbox{$-$}\;}
\newdimen\cdotsheight
\def\cds{\lower\cdotsheight\hbox{$\cdots$}}
\begin{document}
\begin{center}
{\bf    COMBINATORIAL IDENTITIES DERIVING FROM THE $n$-th POWER OF A
$2\times 2$ MATRIX }
\vskip 20pt
{\bf J. Mc Laughlin \footnote{ http://www.trincoll.edu/~jmclaugh/}}\\
{\smallit   Mathematics Department
 Trinity College
300 Summit Street, Hartford, CT 06106-3100}\\
{\tt   james.mclaughlin@trincoll.edu}\\
\vskip 10pt
%{\bf Author Two\footnote{any footnote here}}\\
%{\smallit Department of Mathematics, University of Two, Anywhere AAAAA, Country}\\
%{\tt you@math.two.edu}\\ (optional)
\end{center}
\vskip 30pt
\centerline{\smallit Received: , Accepted: , Published: }
\vskip 30pt
\centerline{\bf Abstract}
\noindent
In this paper we give a new  formula for the $n$-th power of a
$2\times2$ matrix.

More precisely, we prove the following: Let $A= \left (
\begin{matrix}
a & b \\
c & d
\end{matrix}
\right )$ be an arbitrary $2\times2$ matrix, $T=a+d$  its trace,
 $D= ad-bc$ its determinant and define
\[
y_{n} :\,= \sum_{i=0}^{\lfloor n/2 \rfloor}\binom{n-i}{i}T^{n-2
i}(-D)^{i}.
\]
Then, for $n \geq 1$,
\begin{equation*}
A^{n}=\left (
\begin{matrix}
y_{n}-d \,y_{n-1} & b \,y_{n-1} \\
c\, y_{n-1}& y_{n}-a\, y_{n-1}
\end{matrix}
\right ).
\end{equation*}

We use this formula together with an existing formula for the
$n$-th power of a matrix,  various matrix identities,  formulae
for the $n$-th power of particular matrices, etc, to derive
various combinatorial identities.

\pagestyle{myheadings}
\markright{\smalltt INTEGERS:
 \smallrm ELECTRONIC JOURNAL OF COMBINATORIAL NUMBER THEORY
\smalltt x (200x), \#A11\hfill}
\thispagestyle{empty}
\baselineskip=15pt
\vskip 30pt

%\begin{document}
%\title[ $n$-th power of a
%$2\times 2$ Matrix ]
%       {Identities Deriving From The $n$-th Power Of A
%$2\times 2$ Matrix  }
%\author{J. Mc Laughlin}
%\address{Mathematics Department\\
% Trinity College\\
%300 Summit Street, Hartford, CT 06106-3100}
%\email{james.mclaughlin@trincoll.edu}
 %\keywords{}
% \subjclass{}
%\date{September, 8, 2003}
%\begin{abstract}
%In this paper we give a new  formula for the $n$-th power of a
%$2\times2$ matrix.

%More precisely, we prove the following: Let $A= \left (
%\begin{matrix}
%a & b \\
%c & d
%\end{matrix}
%\right )$ be an arbitrary $2\times2$ matrix, $T=a+d$  its trace,
% $D= ad-bc$ its determinant and define
%\[
%y_{n} :\,= \sum_{i=0}^{\lfloor n/2 \rfloor}\binom{n-i}{i}T^{n-2
%i}(-D)^{i}.
%\]
%Then, for $n \geq 1$,
%\begin{equation*}
%A^{n}=\left (
%\begin{matrix}
%y_{n}-d \,y_{n-1} & b \,y_{n-1} \\
%c\, y_{n-1}& y_{n}-a\, y_{n-1}
%\end{matrix}
%\right ).
%\end{equation*}

%We use this formula together with an existing formula for the
%$n$-th power of a matrix,  various matrix identities,  formulae
%for the $n$-th power of particular matrices, etc, to derive
%various combinatorial identities.
%\end{abstract}

%\maketitle

%           Introduction

\section{Introduction } \label{S:intro}
Throughout the paper, let $I$ denote the $2\times2$-identity
matrix and $n$ an arbitrary positive integer.
 In \cite{W92},
Williams gave the following formula for the $n$-th power of
$2\times2$ matrix $A$ with eigenvalues $\alpha$ and $\beta$:
\begin{equation}\label{eq1}
A^{n}=
\begin{cases}
\alpha^{n} \left ( \frac{A-\beta I}{\alpha-\beta} \right) +
\beta^{n} \left ( \frac{A-\alpha I}{\beta-\alpha} \right),
\,\,\,\, \text{ if } \alpha \not = \beta,\\
\phantom{as} \\ \alpha^{n-1}(nA-(n-1)\alpha I),
\,\,\,\,\,\,\,\,\,\,\text{ if } \alpha = \beta.
\end{cases}
\end{equation}
Blatz had given a similar, if slightly more complicated
expression, in \cite{B68}.

For our present purposes (producing combinatorial identities), it
is preferable to express $A^{n}$ directly in terms of the entries
of $A$, rather than the eigenvalues of $A$. If we let $T$ denote
the trace of $A$ and $D$ its determinant, then, without loss of
generality, $\alpha = (T+\sqrt{T^{2}-4\,D})/2$ and $\beta =
(T+\sqrt{T^{2}-4\,D})/2$. A little elementary arithmetic gives
that (in the case $\alpha \not = \beta$), if
\begin{equation}\label{eq0}
z_{n}:=\frac{\alpha^{n}-\beta^{n}}{\alpha-\beta}=
\frac{\sum_{m=0}^{\lfloor (n-1)/2
\rfloor}\binom{n}{2m+1}T^{n-2m-1}(T^{2}-4D)^{m}}{2^{n-1}},
\end{equation}
then
\begin{equation}\label{eq1alt}
A^{n}= z_{n}A-z_{n-1}\,D\,I.
\end{equation}
Equation \ref{eq1alt} also holds in the case of equal eigenvalues
(when $T^{2}-4\,D=0$), if $z_{n}$ is assumed to have the value on
the right of \eqref{eq0}. The key point here is that if another
closed-form expression exists for
 $A^{n}$, then equating the expressions for the entries of $A^{n}$
 from this closed form with the expressions derived from
 Equations \ref{eq0} and  \ref{eq1alt} will produce various identities.

 As an illustration, we consider the following  example.
 Let $\{F_{n}\}_{n=1}^{\infty}$ denote the Fibonacci sequence,
defined by $F_{0}=0$, $F_{1}=1$ and $F_{i+1}=F_{i}+F_{i-1}$, for
$i\geq 1$.  The identity
\begin{equation}\label{eq2a}
\left (
\begin{matrix}
1 & 1 \\
1 & 0
\end{matrix}
\right )^{n} = \left (
\begin{matrix}
F_{n+1} & F_{n} \\
F_{n} & F_{n-1}
\end{matrix}
\right )
\end{equation}
leads directly to several of the known formulae involving the
Fibonacci numbers. For example, direct substitution of $T=-D = 1$
in Equation \ref{eq0} gives Formula 91 from Vajda's list
\cite{V89}:
\begin{equation}\label{eqvaj}
F_{n} =\frac{\sum_{m=0}^{\lfloor (n-1)/2
\rfloor}\binom{n}{2m+1}5^{m}}{2^{n-1}}.
\end{equation}
Many other formulae can be similarly derived. This method of
deriving combinatorial identities of course needs \emph{two}
expressions for $A^{n}$ and for a general $2 \times 2$ matrix,
these have not been available.

In this present paper we remedy this situation by presenting a new
formula for $A^{n}$, where the entries are also expressed in terms
of the entries in $A$.

\vskip 30pt

\section{Main Theorem}
We prove the following theorem.
\begin{theorem}\label{t1}
Let
\begin{equation}\label{eqt1a}
A= \left (
\begin{matrix}
a & b \\
c & d
\end{matrix}
\right )
\end{equation}
be an arbitrary $2\times2$ matrix and let $T=a+d$ denote its trace
and $D= ad-bc$ its determinant. Let
\begin{equation}\label{eqt1b}
y_{n} = \sum_{i=0}^{\lfloor n/2 \rfloor}\binom{n-i}{i}T^{n-2
i}(-D)^{i}.
\end{equation}
Then, for $n \geq 1$,
\begin{equation}\label{eqt1c}
A^{n}=\left (
\begin{matrix}
y_{n}-d \,y_{n-1} & b \,y_{n-1} \\
c\, y_{n-1}& y_{n}-a\, y_{n-1}
\end{matrix}
\right ).
\end{equation}
\end{theorem}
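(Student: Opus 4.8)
The plan is to prove \eqref{eqt1c} by induction on $n$, the engine being the second-order linear recurrence shared by $A^n$ (via Cayley--Hamilton) and by the scalar sequence $y_n$.

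The first step is to record the auxiliary fact that $y_0 = 1$, $y_1 = T$, and $y_n = T\,y_{n-1} - D\,y_{n-2}$ for all $n \ge 2$. The initial values come straight from \eqref{eqt1b}. For the recurrence I would reindex: $-D\,y_{n-2} = \sum_{i}\binom{n-2-i}{i}T^{n-2-2i}(-D)^{i+1} = \sum_{j}\binom{n-1-j}{j-1}T^{n-2j}(-D)^{j}$ while $T\,y_{n-1} = \sum_{j}\binom{n-1-j}{j}T^{n-2j}(-D)^{j}$; adding these and applying Pascal's rule $\binom{m}{k-1}+\binom{m}{k}=\binom{m+1}{k}$ with $m=n-1-j$, $k=j$ gives $\sum_{j}\binom{n-j}{j}T^{n-2j}(-D)^j = y_n$. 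A cleaner alternative is to prove the generating-function identity $\sum_{n\ge 0}y_n x^n = (1-Tx+Dx^2)^{-1}$: the substitution $n = k+2i$ turns the double sum into $\sum_{k\ge 0}(Tx)^k\sum_{i\ge 0}\binom{k+i}{i}(-Dx^2)^i = \sum_{k\ge 0}(Tx)^k(1+Dx^2)^{-k-1} = (1-Tx+Dx^2)^{-1}$, and clearing the denominator yields the recurrence.

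The second step is Cayley--Hamilton: since the characteristic polynomial of $A$ is $\lambda^2 - T\lambda + D$, we have $A^2 = TA - DI$, and multiplying by $A^{n-1}$ gives $A^{n+1} = T\,A^n - D\,A^{n-1}$ for every $n\ge 1$. Now induct on $n$. For $n=1$ the right-hand side of \eqref{eqt1c} has entries $y_1 - d\,y_0 = T-d = a$, $b\,y_0 = b$, $c\,y_0 = c$, $y_1 - a\,y_0 = T-a = d$, hence equals $A$; for $n=2$ its entries are $y_2 - d\,y_1 = T^2 - D - dT = Ta - D$, $b\,y_1 = bT$, $c\,y_1 = cT$, $y_2 - a\,y_1 = Td - D$, which matches $A^2 = TA - DI$ entrywise. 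For the inductive step, assume \eqref{eqt1c} holds for $n-1$ and $n$ with $n\ge 2$ and use $A^{n+1} = T\,A^n - D\,A^{n-1}$. The $(1,2)$-entry of the right-hand side is $T\,b\,y_{n-1} - D\,b\,y_{n-2} = b(T\,y_{n-1}-D\,y_{n-2}) = b\,y_n$, as needed, and similarly for $(2,1)$; the $(1,1)$-entry is $T(y_n - d\,y_{n-1}) - D(y_{n-1} - d\,y_{n-2}) = (T\,y_n - D\,y_{n-1}) - d(T\,y_{n-1}-D\,y_{n-2}) = y_{n+1} - d\,y_n$ by two applications of the $y$-recurrence, and the $(2,2)$-entry is the same with $a$ in place of $d$. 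This closes the induction.

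The one genuinely delicate point is the bookkeeping in the $y$-recurrence: after reindexing, the sums for $y_n$, $T\,y_{n-1}$ and $-D\,y_{n-2}$ carry the bounds $\lfloor n/2\rfloor$, $\lfloor (n-1)/2\rfloor$ and $\lfloor (n-2)/2\rfloor$ respectively, so one must check that extending all of them to a common index range contributes only vanishing terms --- binomial coefficients of the form $\binom{m}{-1}$ or $\binom{m}{k}$ with $k>m$ --- before Pascal's rule can be applied termwise; everything else is routine. It is worth noting that the argument in effect identifies $y_n$ with the quantity $z_{n+1}$ of \eqref{eq0}, so Theorem~\ref{t1} and formula \eqref{eq1alt} are repackagings of the same recurrence, the present form having the merit of being expressed purely through the entries of $A$.
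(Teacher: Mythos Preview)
Your proof is correct and follows essentially the same strategy as the paper's: induction on $n$, with the crux being the second-order recurrence $y_{n}=T\,y_{n-1}-D\,y_{n-2}$, established via Pascal's rule on the binomial coefficients.

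The only organizational difference is in the inductive step. The paper computes $A^{k+1}=A\cdot A^{k}$ by direct matrix multiplication of $A$ against the inductive formula, which forces the entries into the desired shape once the $y$-recurrence is in hand; you instead invoke Cayley--Hamilton to obtain the matrix recurrence $A^{n+1}=T\,A^{n}-D\,A^{n-1}$ and then apply the $y$-recurrence entrywise. Your route is marginally cleaner, since the matrix and scalar sequences then visibly satisfy the \emph{same} linear recurrence and the induction becomes a pure matching of initial data. The paper, in turn, handles the boundary bookkeeping in the $y$-recurrence by an explicit parity split ($k$ even versus $k$ odd) rather than by your convention that out-of-range binomial coefficients vanish; both are fine, and your remark flagging this as the one delicate point is apt.
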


\begin{proof}
The proof is by induction. Equation \ref{eqt1c} is easily seen to
be true for $n=1,\,2$, so suppose it is true for $n=1,\dots , k$.
This implies  that
{\allowdisplaybreaks
\begin{equation*}
A^{k+1}=\left (
\begin{matrix}
a\,y_{k}+(b\,c-a\,d)y_{k-1} & b \,y_{k} \\
c\, y_{k}& d\,y_{k}+(b\,c-a\,d)y_{k-1}
\end{matrix}
\right ).
\end{equation*}
}
Thus the result will follow if it can be shown that
\begin{equation*}
y_{k+1}=(a+d)y_{k}+(b\,c-a\,d)y_{k-1}.
\end{equation*}
Upon substituting from Equation \ref{eqt1b}, we have that
{\allowdisplaybreaks
\begin{align*}
(a+d)y_{k}&+(b\,c-a\,d)y_{k-1}\\
&=
\sum_{i=0}^{\lfloor k/2
\rfloor}\binom{k-i}{i}T^{k+1-2 i}(-D)^{i}
%&\phantom{asdf}
+ \sum_{i=0}^{\lfloor (k-1)/2
\rfloor}\binom{k-1-i}{i}T^{k-1-2 i}(-D)^{i+1}\\
&= \sum_{i=0}^{\lfloor k/2
\rfloor}\binom{k-i}{i}T^{k+1-2 i}(-D)^{i}
%&\phantom{asdasdsadf}
+ \sum_{i=1}^{\lfloor (k+1)/2
\rfloor}\binom{k-i}{i-1}T^{k+1-2 i}(-D)^{i}.
\end{align*}
}
If $k$ is even, then $\lfloor k/2 \rfloor = \lfloor (k+1)/2
\rfloor$, and
{\allowdisplaybreaks
\begin{align*}
(a+d)y_{k}+(b\,c-a\,d)y_{k-1}
&=\binom{k}{0}T^{k+1}
 + \sum_{i=1}^{\lfloor (k+1)/2 \rfloor}\left (\binom{k-i}{i}+
\binom{k-i}{i-1} \right )T^{k+1-2 i}(-D)^{i}\\
&=\sum_{i=0}^{\lfloor (k+1)/2 \rfloor}\binom{k+1-i}{i}T^{k+1-2
i}(-D)^{i}=y_{k+1}.
\end{align*}
}
 If $k$ is odd, then $\lfloor k/2 \rfloor = \lfloor (k-1)/2
\rfloor$, and
{\allowdisplaybreaks
\begin{align*}
(a+d)y_{k}+(b\,c-a\,d)y_{k-1}&=\binom{k}{0}T^{k+1}
 + \sum_{i=1}^{\lfloor (k-1)/2 \rfloor}\left (\binom{k-i}{i}+
\binom{k-i}{i-1} \right )T^{k+1-2 i}(-D)^{i}\\
& \phantom{saaasddsad}+ \binom{k-\lfloor
(k+1)/2 \rfloor}{\lfloor (k+1)/2 \rfloor-1}T^{k+1-2 \lfloor
(k+1)/2 \rfloor}(-D)^{\lfloor (k+1)/2 \rfloor }\\
&=\sum_{i=0}^{\lfloor (k+1)/2 \rfloor}\binom{k+1-i}{i}T^{k+1-2
i}(-D)^{i}=y_{k+1}.
\end{align*}
}
This completes the proof.
\end{proof}
Remark:
 Schwerdtfeger (\cite{S62}, pages 104--105) outlines a method due to Jacobsthal \cite{J19}
for finding the $n$ power of an arbitrary $2\times2$ matrix $A$ which,
after a little manipulation, is essentially
equivalent to the method described in Theorem \ref{t1}. We were not aware of Jacobsthal's
result
before our own discovery.

The aim now becomes to use  combinations of the formulae at
\eqref{eq1alt} and \eqref{eqt1c}, together with various devices
such writing $A^{nk}=(A^{k})^{n}$, writing $A=B+C$ or $A=B\,C$
where $B$ and $C$ commute, etc, to produce combinatorial
identities. One can also consider particular matrices $A$ whose
$n$-th power has a simple form, and then use the formulae at
\eqref{eq1alt} and \eqref{eqt1c} to derive combinatorial
identities,

As an immediate consequence of \eqref{eq1alt} and \eqref{eqt1c},
we have the following corollary to Theorem \ref{t1}.
\begin{corollary}\label{cor1}
For $1 \leq j \leq \lfloor (n-1)/2 \rfloor$,
{\allowdisplaybreaks
\begin{align}\label{eqt1d}
\binom{n}{2j+1} &=\sum_{i=j}^{\lfloor (n-1)/2
\rfloor}(-1)^{i-j}2^{n-1-2i}\binom{i}{j}\binom{n-1-i}{i},\\
\binom{n-1-j}{j}&=2^{-n+1+2j}\sum_{i=j}^{\lfloor (n-1)/2
\rfloor}\binom{n}{2i+1} \binom{i}{j}. \notag
\end{align}
}
\end{corollary}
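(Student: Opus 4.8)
The plan is to extract both identities of the corollary from the single polynomial identity $z_{n}=y_{n-1}$, which itself drops out of comparing the two available formulae for $A^{n}$. Apply both \eqref{eq1alt} and \eqref{eqt1c} to a matrix $A$ whose off-diagonal entry $b$ is nonzero and compare the $(1,2)$-entries: from \eqref{eq1alt} this entry is $z_{n}b$ and from \eqref{eqt1c} it is $b\,y_{n-1}$, so $z_{n}=y_{n-1}$. By \eqref{eq0} and \eqref{eqt1b}, $z_{n}$ and $y_{n-1}$ are fixed polynomials in the trace $T$ and the determinant $D$, and as $A$ ranges over matrices with $b\neq 0$ the pair $(T,D)$ attains all pairs of scalar values; hence $z_{n}=y_{n-1}$ is an identity of polynomials in the two independent indeterminates $T$ and $D$. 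Written out, with $S:=T^{2}-4D$, it reads
\[
\frac{1}{2^{n-1}}\sum_{m=0}^{\lfloor (n-1)/2\rfloor}\binom{n}{2m+1}T^{n-1-2m}S^{m}=\sum_{i=0}^{\lfloor (n-1)/2\rfloor}\binom{n-1-i}{i}T^{n-1-2i}(-D)^{i},
\]
and the two identities of the corollary are simply the two ways of reading off coefficients after re-expanding one side in terms of the monomials natural to the other.

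For the second identity I would expand the left-hand side in powers of $D$. Since $S^{m}=(T^{2}-4D)^{m}=\sum_{j=0}^{m}\binom{m}{j}4^{j}T^{2m-2j}(-D)^{j}$, the left-hand side equals $\frac{1}{2^{n-1}}\sum_{j}4^{j}\bigl(\sum_{m\geq j}\binom{n}{2m+1}\binom{m}{j}\bigr)T^{n-1-2j}(-D)^{j}$, and the only monomials occurring here are exactly the $T^{n-1-2j}(-D)^{j}$ appearing on the right. Equating the coefficient of $T^{n-1-2j}(-D)^{j}$ on the two sides and using $4^{j}/2^{n-1}=2^{-n+1+2j}$ yields $\binom{n-1-j}{j}=2^{-n+1+2j}\sum_{i=j}^{\lfloor (n-1)/2\rfloor}\binom{n}{2i+1}\binom{i}{j}$, the second formula.

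For the first identity I would run the change of basis the other way, expressing $-D$ through $S$: from $-D=\tfrac{1}{4}(S-T^{2})$ one obtains $(-D)^{i}=4^{-i}\sum_{k=0}^{i}(-1)^{i-k}\binom{i}{k}T^{2i-2k}S^{k}$, so the right-hand side of the displayed identity becomes $\sum_{k}\bigl(\sum_{i\geq k}(-1)^{i-k}4^{-i}\binom{i}{k}\binom{n-1-i}{i}\bigr)T^{n-1-2k}S^{k}$. Equating the coefficient of $T^{n-1-2j}S^{j}$ with the value $\binom{n}{2j+1}/2^{n-1}$ coming from the left-hand side and multiplying through by $2^{n-1}$ gives $\binom{n}{2j+1}=\sum_{i=j}^{\lfloor (n-1)/2\rfloor}(-1)^{i-j}2^{n-1-2i}\binom{i}{j}\binom{n-1-i}{i}$, the first formula. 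The summation ranges need only routine attention: $\binom{m}{j}$ and $\binom{i}{k}$ vanish below the diagonal and $\binom{n-1-i}{i}$ vanishes once $i>\lfloor (n-1)/2\rfloor$, so every inner sum may be truncated to $j\leq i\leq\lfloor (n-1)/2\rfloor$, precisely as written in the statement. I do not anticipate a genuine obstacle; the only things to watch are the accounting of powers of $2$ and $4$ and the signs produced by $(-4D)^{j}$ and by the expansion of $(-D)^{i}$, together with the (immediate) justification that $T$ and $D$ may be treated as independent variables.
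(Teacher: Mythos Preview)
Your proof is correct and follows essentially the same approach as the paper: both equate the $(1,2)$-entries from \eqref{eq1alt} and \eqref{eqt1c} to obtain the polynomial identity $z_n=y_{n-1}$, and then extract the two formulae by expanding one side in the monomial basis natural to the other and comparing coefficients. The only cosmetic difference is that for the first identity the paper makes the substitution $T=2x$, $D=x^{2}-y$ (so that $T^{2}-4D=4y$) to absorb the powers of $2$, whereas you track the factors of $2$ and $4$ directly via $-D=\tfrac{1}{4}(S-T^{2})$; these are the same computation.
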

\begin{proof}
For the first identity, we equate the $(1,2)$ entries at \eqref{eq1alt}
and \eqref{eqt1c}
and then let $T=2x$ and $D=x^2-y$.  This gives that
{\allowdisplaybreaks
\begin{equation*}
\sum_{i=0}^{\lfloor (n-1)/2 \rfloor}\binom{n-1-i}{i}(2x)^{n-1-2
i}(y-x^{2})^{i} = \sum_{i=0}^{ \lfloor (n-1)/2 \rfloor}
\binom{n}{2\,i+1} x^{n-1-2i} y^{i}.
\end{equation*}
}
We now expand the left side and equate coefficients of equal
powers of $y/x^2$.

For the second identity, we similarly equate the $(1,2)$ entries at \eqref{eq1alt}
and \eqref{eqt1c}, expand $(T^{2}-4 D)^{m}$ by the binomial theorem and
compare coefficients of like powers of $(-D/T^{2})$.

\end{proof}
Remark: The literature on combinatorial identities is quite extensive and all
of the identities we describe in this paper may already exist
elsewhere. However, we believe that at least some of the methods
used to generate/prove them are new.

%\vskip 30pt

\section{Some Formulae for the Fibonacci Numbers}

 The identity (\cite{V61}, pp. 18-20)
{\allowdisplaybreaks
\begin{equation}\label{eq2b}
F_{n}=\sum_{i=0}^{\lfloor (n-1)/2 \rfloor}\binom{n-1-i}{i}
\end{equation}
}
follows from \eqref{eq2a}, \eqref{eqt1b} and \eqref{eqt1c}, upon
setting $T=-D=b=1$.

The Lucas sequence $\{L_{n}\}_{n=1}^{\infty}$ is defined by
$L_{1}=1$, $L_{2}=3$ and $L_{n+1}=L_{n}+L_{n-1}$, for $n \geq 2$.
The identity $A^{n\,k}=(A^{n})^{k}$ implies
{\allowdisplaybreaks
\begin{equation*}
\left (
\begin{matrix}
F_{n\,k+1} & F_{n\,k} \\
F_{n\,k} & F_{n\,k-1}
\end{matrix}
\right ) =\left (
\begin{matrix}
F_{n+1} & F_{n} \\
F_{n} & F_{n-1}
\end{matrix}
\right )^{k}.
\end{equation*}
}
The facts that $L_{n}=F_{n+1}+F_{n-1}$ and
$F_{n+1}F_{n-1}-F_{n}^{2}=(-1)^{n}$ together with Theorem \ref{t1}
now imply the following identity from \cite{J03} (page 5):
{\allowdisplaybreaks
\begin{equation}\label{eq2c}
F_{n\,k}=F_{n}\sum_{i=0}^{\lfloor (k-1)/2
\rfloor}\binom{k-1-i}{i}L_{n}^{k-1-2 i}(-1)^{i(n+1)}.
\end{equation}
}
Let $\imath:=\sqrt{-1}$ and  define
%\begin{equation*}
$B:=
\frac{1}{\sqrt{2\,i-1}}
\left (
\begin{matrix}
1+\imath & \imath \\
\imath & 1
\end{matrix}
\right )$.
%\end{equation*}
Then $B^{2}=
\left (
\begin{matrix}
1 & 1 \\
1 & 0
\end{matrix}
\right )$,  leading to the identity:
{\allowdisplaybreaks
\begin{equation}\label{eq2d}
F_{k} =\frac{1}{\imath^{k-1}}
\sum_{m=0}^{k-1}\binom{2k-1-m}{m}(2+i)^{k-1- m}(-1)^{m},
\end{equation}
}
a variant of Ram's formula labelled FeiPi at \cite{R03}.

Many similar identities for the Fibonacci and Lucas sequences
can also be easily derived from Theorem \ref{t1}.

\vskip 30pt

\section{ A Binomial Expansion from Williams' Formula}

\begin{proposition}
Let $n \in \mathbb{N}$ and  $1 \leq t \leq n $, $t$ integral. Then
{\allowdisplaybreaks
\begin{multline}\label{eq4a}
\binom{n}{t}=
\sum_{m=0}^{\lfloor \frac{n-1}{2} \rfloor}
\sum_{j=0}^{n-1-2m}(-1)^{m}2^{n-1-2m-j}
\binom{n-1-m}{m}\binom{n-1-2m}{j}\binom{m}{t-j-1}.
\end{multline}
}
\end{proposition}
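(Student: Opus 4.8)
The plan is to extract the identity by combining Williams' formula \eqref{eq1} with the new formula \eqref{eqt1c} of Theorem~\ref{t1}, just as in the proof of Corollary~\ref{cor1}, but now tracking the $(1,1)$ entry rather than the $(1,2)$ entry so that a genuine binomial coefficient $\binom{n}{t}$ appears on the left. First I would choose a specific matrix $A$ whose powers have an explicitly known $(1,1)$ entry of the form $\binom{n}{t}x^{\cdots}y^{\cdots}$; the natural candidate is a matrix with the property that $A^n$ has binomial coefficients as entries, e.g.\ $A=\begin{pmatrix} x & y \\ 0 & x\end{pmatrix}$ or a triangular/shift-type matrix, or more likely a matrix of the form associated with $(xI+N)^n$ expanded by the binomial theorem. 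For such $A$ one gets, from a direct computation, that the $(1,2)$ (or $(1,1)$) entry of $A^n$ equals $\binom{n}{t}$ times a monomial, which will become the left-hand side of \eqref{eq4a}.

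Next I would write down Williams' expression \eqref{eq1} (equivalently \eqref{eq1alt} with $z_n$ as in \eqref{eq0}) for the same entry of $A^n$. The right-hand side of \eqref{eq0} already carries the double structure of \eqref{eq4a}: the outer sum over $m$ with $\binom{n}{2m+1}$ will be replaced — via $A^n=z_nA-z_{n-1}DI$ and then re-expansion — by the outer sum $\sum_{m}(-1)^m 2^{\,n-1-2m}\binom{n-1-m}{m}$, which is exactly the coefficient pattern produced by \emph{our} formula \eqref{eqt1c} after the substitution $T=2x$, $D=x^2-y$ used in Corollary~\ref{cor1}. In other words I would set $T=2x$ and $D=x^2-y$, so that $T^2-4D=4y$, and use \eqref{eqt1c} to express the relevant entry of $A^n$ as $\sum_{m=0}^{\lfloor (n-1)/2\rfloor}\binom{n-1-m}{m}(2x)^{n-1-2m}(y-x^2)^{m}$ (plus a lower-order correction term coming from the $-d\,y_{n-1}$ or $y_n$ pieces, which I would handle by the same device). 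Then I expand $(y-x^2)^m=\sum_{j}\binom{m}{j}(-x^2)^{\,m-j}y^{j}$ — or more precisely expand in the combination that yields $\binom{n-1-2m}{j}$ — so that the double sum $\sum_m\sum_j(-1)^m 2^{\,n-1-2m-j}\binom{n-1-m}{m}\binom{n-1-2m}{j}$ emerges, multiplied by the remaining $\binom{m}{t-j-1}$ which records which monomial we are reading off.

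The technical heart, then, is a careful bookkeeping of which power of $x$ and $y$ the coefficient $\binom{n}{t}$ sits in front of on the left, matched against which $(m,j)$ terms on the right contribute to that same monomial; the constraint "$\binom{m}{t-j-1}$" is precisely the indicator that the exponent of the ``$y$''-type variable in the $(m,j)$ term equals the one selected by $t$. Concretely: after substituting $T=2x,\ D=x^2-y$ on both sides and expanding everything as a polynomial in $x,y$, I equate the coefficient of a fixed monomial; the left side contributes $\binom{n}{t}$ (up to the chosen normalization of $A$), and the right side contributes the triple product summed over all $(m,j)$ with the exponent matching, which forces the third factor to be $\binom{m}{t-j-1}$. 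The main obstacle I anticipate is pinning down the correct auxiliary matrix $A$ (and the correct monomial to read off) so that the left-hand side is exactly $\binom{n}{t}$ with no stray factors, and correctly accounting for the ``$y_n - d\,y_{n-1}$'' diagonal correction term in \eqref{eqt1c} versus the ``$z_nA-z_{n-1}DI$'' shape of \eqref{eq1alt}; once the matrix is fixed, the remainder is the same coefficient-comparison argument already used to prove Corollary~\ref{cor1}, applied to a different entry and with one extra binomial expansion of $(T^2-4D)^m$.
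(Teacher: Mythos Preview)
Your overall plan --- pick a special $2\times2$ matrix, compute its $n$-th power two ways (Williams' eigenvalue formula versus Theorem~\ref{t1}), and extract a coefficient --- is exactly the paper's strategy. But the concrete choices you sketch would not produce \eqref{eq4a}; the specific matrix is the missing idea.

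The paper takes $A=\left(\begin{smallmatrix} f+2e & 1\\ 0 & f\end{smallmatrix}\right)$, upper-triangular with \emph{distinct} eigenvalues $\alpha=f+2e$ and $\beta=f$. Your candidates $\left(\begin{smallmatrix} x & y\\0 & x\end{smallmatrix}\right)$ and $(xI+N)^n$ have a \emph{repeated} eigenvalue, so their $n$-th powers carry only the single coefficient $n=\binom{n}{1}$, never the full family $\binom{n}{t}$. With distinct eigenvalues the $(1,2)$ entry (not the $(1,1)$ entry you propose) is the clean one: Williams' formula gives $\dfrac{\alpha^n-\beta^n}{\alpha-\beta}=\dfrac{(f+2e)^n-f^n}{2e}$, while Theorem~\ref{t1} gives simply $b\,y_{n-1}=y_{n-1}$. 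After the rescaling $e\mapsto e/2$ and then $f=1$ this becomes
\[
(1+e)^n \;=\; 1 \;+\; e\sum_{m}\binom{n-1-m}{m}\,(e+2)^{\,n-1-2m}\bigl(-(e+1)\bigr)^{m},
\]
and now the three binomial factors in \eqref{eq4a} have transparent sources: $\binom{n-1-m}{m}$ is already present; $\binom{n-1-2m}{j}\,2^{\,n-1-2m-j}$ comes from expanding $(e+2)^{\,n-1-2m}$; and $\binom{m}{t-j-1}$ comes from expanding $(e+1)^m$ and matching the total exponent of $e$ to~$t$. In particular the substitution $T=2x,\ D=x^2-y$ you imported from Corollary~\ref{cor1} is a red herring here: the factor $\binom{n-1-2m}{j}$ does \emph{not} arise from expanding $(-D)^m$ (that would only ever produce a $\binom{m}{\cdot}$), but from expanding the trace power $T^{\,n-1-2m}$ itself --- and for that one needs $T$ to be a genuine binomial in the free variable, which is exactly what the choice $T=2(e+f)$ arranges.
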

\begin{proof}
Let $A=\left (
\begin{matrix}
f+2 e & 1 \\
0 & f
\end{matrix}
\right )$. The eigenvalues of $A$ are clearly $f+2e$ and $f$. With
the notation of Theorem \ref{t1}, $T=2(e+f)$ and $D=f(f+2e)$.  If
we now equate the $(1,2)$ entry from the left side of Equation
\ref{eq1} with the $(1,2)$ entry from the left side of Equation
\ref{eqt1c}, we  get, after a little manipulation, that
{\allowdisplaybreaks
\begin{equation*}
(f+2e)^{n} = f^{n} + 2e
\sum_{m=0}^{\lfloor (n-1)/2
\rfloor}\binom{n-1-m}{m}(2(e+f))^{n-1-2 m}\left (- f(2e+f)\right )^{m}.
\end{equation*}
}
Next, replace $e$ by $e/2$ to get that
{\allowdisplaybreaks
\begin{equation}\label{eq4c}
(f+e)^{n} = f^{n} + e \sum_{m=0}^{\lfloor (n-1)/2
\rfloor}\binom{n-1-m}{m}(e+2f)^{n-1-2 m}\left (- f(e+f)\right
)^{m}.
\end{equation}
}
 Finally, set $f=1$, expand both sides of Equation \ref{eq4c} and
compare coefficients of like powers of $e$ to get the result.
\end{proof}

\vskip 30pt

\section{Commutating Matrices I}

As before, let
\[
A=\left (
\begin{matrix}
a & b \\
c & d
\end{matrix}
\right ).
\]
We write
{\allowdisplaybreaks
\begin{equation}\label{eqxa}
A=(mA+w\,I)+((1-m)A-w\,I),
\end{equation}
}
noting that the matrices on the right commute.
{\allowdisplaybreaks
\begin{equation}\label{eqx}
A^{n}
=
\sum_{j=0}^{n}
\binom{n}{j}(mA+w\,I) ^{n-j}((1-m)A-w\,I)^{j}
\end{equation}
}
and we might hope to use the formulae at \eqref{eq1} and
\eqref{eqt1a} to derive new  identities. We give two examples.

\begin{proposition}\label{prop2-}
Let $n \in \mathbb{N}$ and $k$, $r$ integers with $0 \leq k\leq r  \leq n$. Then
 {\allowdisplaybreaks
\begin{equation}\label{eqxaa}
(-1)^{n-k}\sum_{j=0}^{n-k}
 \binom{n}{j}
 \binom{n-j}{k}
\binom{j}{r-k}
(-1)^{j}= \begin{cases}\displaystyle{\binom{n}{k}}, & r=n,\\
\phantom{as} &\phantom{as}\\
0, & r \not = n.
\end{cases}
\end{equation}
}
\end{proposition}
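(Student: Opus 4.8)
The plan is to obtain \eqref{eqxaa} by feeding a suitable matrix into the commuting decomposition \eqref{eqxa}--\eqref{eqx} and then comparing coefficients, in the spirit of the earlier propositions. I would take $A$ to be a singular, non-scalar $2\times2$ matrix with indeterminate trace $T$ --- concretely the one with first row $(T,1)$ and second row $(0,0)$. Then $D=0$, the trace is $T$, and because $A^{2}=TA$ one has $A^{p}=T^{p-1}A$ for every $p\ge1$ while $A^{0}=I$; moreover $I$ and $A$ are linearly independent over the polynomial ring $\mathbb{Z}[T,m,w]$, so any matrix identity built from polynomials in $A$ splits into an $I$-component and an $A$-component. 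The point of taking $A$ singular (rather than, say, nilpotent) is that the reduction $A^{p}=T^{p-1}A$ discards nothing, so the $A$-component will see all of the binomial data.

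Since $A$ commutes with $I$, I would expand each factor on the right of \eqref{eqx} by the binomial theorem,
\[
(mA+wI)^{n-j}=\sum_{p=0}^{n-j}\binom{n-j}{p}m^{p}w^{n-j-p}A^{p},\qquad ((1-m)A-wI)^{j}=\sum_{q=0}^{j}\binom{j}{q}(1-m)^{q}(-w)^{j-q}A^{q},
\]
multiply them out, and replace $A^{p}A^{q}=A^{p+q}$ by $T^{p+q-1}A$ when $p+q\ge1$ and by $I$ when $p=q=0$. For $n\ge1$ the coefficient of $I$ on the right of \eqref{eqx} is $\sum_{j}\binom{n}{j}w^{n-j}(-w)^{j}=(w-w)^{n}=0$, matching the $I$-component of $A^{n}=T^{n-1}A$, while the coefficient of $A$ gives
\[
T^{n-1}=\sum_{j=0}^{n}\binom{n}{j}\sum_{\substack{p,q\ge0\\ p+q\ge1}}\binom{n-j}{p}\binom{j}{q}(-1)^{j-q}\,m^{p}(1-m)^{q}\,w^{n-p-q}\,T^{p+q-1}.
\]
Re-indexing by $k=p$ and $r=p+q$ (so $q=r-k$, with $1\le r\le n$ and $0\le k\le r$) rewrites the right-hand side as $\sum_{r,k}c_{k,r}\,m^{k}(1-m)^{r-k}w^{n-r}T^{r-1}$, where $c_{k,r}=\sum_{j}\binom{n}{j}\binom{n-j}{k}\binom{j}{r-k}(-1)^{j-(r-k)}$.

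To finish I would regard $T,m,w$ as algebraically independent and compare coefficients. The monomials $m^{k}(1-m)^{r-k}w^{n-r}T^{r-1}$, over admissible $(k,r)$, are linearly independent: the factor $w^{n-r}T^{r-1}$ determines $r$, and for fixed $r$ the polynomials $m^{k}(1-m)^{r-k}$, $0\le k\le r$, are linearly independent (the degree-$r$ Bernstein basis, up to scalars). Writing $T^{n-1}=T^{n-1}w^{0}\bigl((1-m)+m\bigr)^{n}=\sum_{k}\binom{n}{k}m^{k}(1-m)^{n-k}w^{0}T^{n-1}$ and matching coefficients forces $c_{k,r}=\binom{n}{k}$ if $r=n$ and $c_{k,r}=0$ if $r<n$. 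Since $(-1)^{j-(r-k)}=(-1)^{j}(-1)^{r-k}$, this reads $\sum_{j}\binom{n}{j}\binom{n-j}{k}\binom{j}{r-k}(-1)^{j}=(-1)^{r-k}\binom{n}{k}$ when $r=n$ and $0$ otherwise; multiplying by $(-1)^{n-k}$ and using $(-1)^{r-k}=(-1)^{n-k}$ at $r=n$ gives exactly \eqref{eqxaa}, the $j$-range extending harmlessly down to $0$ since $\binom{j}{r-k}=0$ for $j<r-k$.

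The binomial bookkeeping --- the inner double expansion, the collapse of the $A$-powers, the sign tracking --- is routine; the step that genuinely matters is the legitimacy of the coefficient comparison, i.e.\ treating $T,m,w$ as free and invoking the linear independence of the $m^{k}(1-m)^{r-k}$. Without it the identity is vacuous, because by the binomial theorem the $j$-sum in \eqref{eqx} already equals $A^{n}$ before any regrouping. As a side remark, \eqref{eqxaa} also has a short direct proof: by $\binom{n}{j}\binom{n-j}{k}=\binom{n}{k}\binom{n-k}{j}$ and $\binom{n-k}{j}\binom{j}{r-k}=\binom{n-k}{r-k}\binom{n-r}{j-r+k}$, the sum $\sum_{j}\binom{n}{j}\binom{n-j}{k}\binom{j}{r-k}(-1)^{j}$ equals $\binom{n}{k}\binom{n-k}{r-k}(-1)^{r-k}\sum_{i}\binom{n-r}{i}(-1)^{i}=\binom{n}{k}\binom{n-k}{r-k}(-1)^{r-k}0^{n-r}$, which is $(-1)^{n-k}\binom{n}{k}$ if $r=n$ and $0$ otherwise; multiplying by $(-1)^{n-k}$ gives the proposition.
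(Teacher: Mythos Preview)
Your argument is correct and follows essentially the paper's route: expand \eqref{eqx}, separate by powers of $w$ to isolate $r$, then by powers of $m$ to isolate $k$, invoking $1=(m+(1-m))^{n}$ for the $r=n$ case. The paper keeps $A$ generic, so that comparing $w$-coefficients gives matrix equations $A^{r}c_{r}(m)=0$ for $r<n$ and $A^{n}c_{n}(m)=A^{n}$, whence $c_{r}(m)=0$, $c_{n}(m)=1$; you instead specialize to a rank-one $A$ with $A^{p}=T^{p-1}A$, converting the matrix identity into a scalar polynomial identity in the three indeterminates $T,m,w$, and you are more explicit about why the coefficient comparison is legitimate (algebraic independence of $T,m,w$ and the Bernstein-basis observation for $m^{k}(1-m)^{r-k}$). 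These are cosmetic variations on the same idea. Your side remark --- rewriting $\binom{n}{j}\binom{n-j}{k}=\binom{n}{k}\binom{n-k}{j}$ and then $\binom{n-k}{j}\binom{j}{r-k}=\binom{n-k}{r-k}\binom{n-r}{j-r+k}$ to collapse the $j$-sum to $\binom{n}{k}\binom{n-k}{r-k}(-1)^{r-k}(1-1)^{n-r}$ --- is a genuinely independent and more elementary proof that sidesteps the matrix apparatus altogether.
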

\begin{proof}
Expand the right side of Equation \ref{eqx}  and re-index to get that
{\allowdisplaybreaks
\begin{multline*}
A^{n} = \sum_{r=0}^{n}w^{n-r}A^{r}(1-m)^{r}(-1)^{r}
\sum_{k=0}^{n}\left (
\frac{m}{1-m} \right )^{k}(-1)^{k} \\
\times \sum_{j=0}^{n-k} \binom{n}{j}\binom{n-j}{k}\binom{j}{r-k}(-1)^{j}.
\end{multline*}
}
If we compare coefficients of $w$ on both sides, it is clear that
{\allowdisplaybreaks
\begin{equation*}
\sum_{k=0}^{n}\left (
\frac{m}{1-m} \right )^{k}(-1)^{k}
 \sum_{j=0}^{n-k} \binom{n}{j}\binom{n-j}{k}\binom{j}{r-k}(-1)^{j}
=\begin{cases}(m-1)^{-n}, & r=n,\\
\phantom{as} &\phantom{as}\\
0, & r \not = n.
\end{cases}
\end{equation*}
}
Equivalently,
{\allowdisplaybreaks
\begin{equation*}
\sum_{k=0}^{n}
m^{k}(1-m)^{n-k}(-1)^{n-k}
 \sum_{j=0}^{n-k} \binom{n}{j}\binom{n-j}{k}\binom{j}{r-k}(-1)^{j}
=\begin{cases}1, & r=n,\\
\phantom{as} &\phantom{as}\\
0, & r \not = n.
\end{cases}
\end{equation*}
}
Finally, equating coefficients of powers of $m$ on both sides of the last
equality give the result. We note that the case for $r=n$ follows since the left side then
must equal
$(m+(1-m))^n$.
\end{proof}

\begin{proposition}\label{prop2}
Let $n \in \mathbb{N}$ and $t$ an integer such that $0 \leq t \leq n$. For each integer $
i \in [0,\,\,
 \lfloor (n-1)/2\rfloor ]$,
 {\allowdisplaybreaks
\begin{align}\label{eqxab}
\sum_{j=1}^{n} &\sum_{r=i}^{\lfloor (j-1)/2\rfloor}
\sum_{k=0}^{j-1-2r}
 \binom{n}{j}
 \binom{j-1-r}{r}
\binom{j-1-2r}{k}
\binom{r}{i}\\
&\times \binom{r-i}{j-k-r+i-n+t} (-1)^{r+t+j+n+i} 2^{k}
= \begin{cases}\binom{n-1-i}{i}, & t=0,\\
\phantom{as} &\phantom{as}\\
0, & t \not = 0.
\end{cases}\notag
\end{align}
}
\end{proposition}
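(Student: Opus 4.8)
The plan is to mimic the proof of Proposition \ref{prop2-}, but using the decomposition \eqref{eqxa} together with \emph{both} formulas \eqref{eq1alt} and \eqref{eqt1c} applied to the various matrix powers, rather than just expanding $A^n$ directly. Concretely, I would start from \eqref{eqx}, which writes $A^n$ as a binomial sum over products $(mA+wI)^{n-j}((1-m)A-wI)^j$. The first factor is a scalar times a power of $I$ when combined appropriately, but the cleanest route is: expand $((1-m)A-wI)^j = \sum_{k} \binom{j}{k} (1-m)^{j-k} A^{j-k}(-w)^k$ is \emph{not} quite it — instead I would treat $(mA+wI)^{n-j}$ as already handled and apply Theorem \ref{t1} (equation \eqref{eqt1c}) to $A^{j-?}$. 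Actually the key structural observation, reading off the summation ranges in \eqref{eqxab}, is that the index $j$ runs $1$ to $n$ (from a binomial $\binom{n}{j}$), the index $r$ runs from $i$ to $\lfloor (j-1)/2\rfloor$ with a factor $\binom{j-1-r}{r}$ (the signature of $y_{j-1}$ expanded via \eqref{eqt1b}), the index $k$ runs $0$ to $j-1-2r$ with factors $\binom{j-1-2r}{k}2^k$ (the signature of expanding $T = 2(\text{something})$ or of expanding a power $(A$-entry$)^{j-1-2r}$ binomially), and $\binom{r}{i}$ together with $\binom{r-i}{\cdots}$ come from a further binomial expansion of $(-D)^r$ after substituting $D = x^2 - y$ type variables, exactly as in the proof of Corollary \ref{cor1}.

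So the detailed steps I would carry out are: (1) In \eqref{eqx}, choose the specific substitution for $m$, $w$ and the matrix entries of $A$ that collapses $(mA+wI)$ to a simple diagonalizable form — most likely $A$ with trace $T = 2x$ and determinant $D = x^2 - y$ as in Corollary \ref{cor1}, and $w$ chosen so that $mA + wI$ has a controlled spectrum (e.g. $w = -m d$ or similar so that one diagonal entry vanishes). (2) Apply Theorem \ref{t1} to the remaining matrix power $A^{j}$ (or $A^{j-k}$), expand $y_{j-1}$ using \eqref{eqt1b}, producing the $\binom{j-1-r}{r}T^{j-1-2r}(-D)^r$ sum — this is the source of the $r$-summation. (3) Expand $T^{j-1-2r} = (2x)^{j-1-2r}$, giving the $2^{k}$ and, after also expanding the complementary factor from $(mA+wI)^{n-j}$ or from $A$ itself, the $\binom{j-1-2r}{k}$ — this is the $k$-summation. (4) Expand $(-D)^r = (y - x^2)^r = \sum_{i}\binom{r}{i}y^i(-x^2)^{r-i}$, giving $\binom{r}{i}$ and starting the $i$-index; a further reindex against the power of $x$ coming from the other side produces $\binom{r-i}{j-k-r+i-n+t}$. (5) On the left-hand side of \eqref{eqx}, $A^n$ itself is, by Theorem \ref{t1}, governed by $y_{n-1}$, whose expansion \eqref{eqt1b} contributes exactly the target $\binom{n-1-i}{i}$ as the coefficient of the appropriate monomial. (6) Compare coefficients of $x^a y^b m^c t$-power monomials on both sides; the case $t=0$ picks out the genuine coefficient $\binom{n-1-i}{i}$, and all $t\neq 0$ monomials must vanish since the left side has no such terms.

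The sign $(-1)^{r+t+j+n+i}$ should fall out as the product of the individual signs: $(-1)^r$ from $(-D)^r$, $(-1)^{r-i}$ from $(-x^2)^{r-i}$, $(-1)^{j}$ and $(-1)^{n}$ from the alternating binomial decomposition in \eqref{eqxa}/\eqref{eqx} (exactly as the $(-1)^{j}$ and $(-1)^{n-k}$ appeared in Proposition \ref{prop2-}), plus a $(-1)^t$ bookkeeping sign from the final monomial match; collecting these gives $(-1)^{r+(r-i)+j+n+t} = (-1)^{2r-i+j+n+t} = (-1)^{i+j+n+t}$ up to the parity of $i$, and a careful accounting of whether it is $i$ or $-i$ in the exponent (they agree mod $2$) yields the stated $(-1)^{r+t+j+n+i}$.

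The main obstacle will be step (4)–(6): correctly matching the powers of the two independent variables (say $x$ and $y$, or equivalently $D/T^2$) across the two sides so that the inner triple sum's free index genuinely equals $\binom{r-i}{j-k-r+i-n+t}$ with the displayed argument, and verifying that the collapse on the right-hand side really isolates $\binom{n-1-i}{i}$ rather than some convolution of several such terms. This is precisely the kind of multi-index reindexing that is routine in principle but error-prone; I would handle it by working out the bijection between monomials explicitly for small $n$ first to pin down the exact argument of the last binomial coefficient and the exact exponent in the sign, then promote that to the general identity. The vanishing for $t \neq 0$ is then automatic because the left side of \eqref{eqx}, namely $A^n$, expanded in the chosen variables, simply contains no monomials of the shape indexed by $t \neq 0$.
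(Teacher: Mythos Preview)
Your high-level strategy --- use the decomposition \eqref{eqxa}, apply Theorem~\ref{t1} to the matrix powers, expand, and compare coefficients of independent monomials --- is the paper's strategy. But two specific choices you make are wrong, and following your plan as written would not produce the identity.

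First, the paper sets $m=0$. Then \eqref{eqx} reads $A^{n}=\sum_{j}\binom{n}{j}w^{n-j}(A-wI)^{j}$, and the factor $(wI)^{n-j}$ is scalar; there is nothing to diagonalize. Your step~(1) keeps $m$ generic and speculates about tuning it so that $mA+wI$ has ``controlled spectrum'' --- this is an unnecessary detour.

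Second, and this is the real gap: Theorem~\ref{t1} is applied to $(A-wI)^{j}$, \emph{not} to $A^{j}$. The shifted matrix $A-wI$ has trace $T-2w$ and determinant $D-Tw+w^{2}$, and it is these shifted quantities that generate the inner binomials. Expanding $(T-2w)^{\,j-1-2r}$ gives $\sum_{k}\binom{j-1-2r}{k}(-2)^{k}T^{\,j-1-2r-k}w^{k}$ --- that is the true source of the $2^{k}$, not a substitution $T=2x$. Expanding the trinomial $(-D+Tw-w^{2})^{r}$ via two nested binomials gives factors $\binom{r}{m}\binom{m}{p}$, which after the reindex $m\mapsto r-i$ become $\binom{r}{i}\binom{r-i}{\cdots}$; the argument $j-k-r+i-n+t$ of the last binomial is exactly what remains when one solves $t=n-j+k+m+p$ for $p$. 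Your steps~(3)--(4) instead expand $T^{\,j-1-2r}$ and $(-D)^{r}$ for the \emph{unshifted} matrix and try to inject the extra structure through a substitution $D=x^{2}-y$; that route never puts $w$ into the determinant term, so the factor $\binom{r-i}{j-k-r+i-n+t}$ with its $t$-dependence cannot appear. The paper's final substitution is $T\to 1/y$, $D\to -x/y^{2}$ (not $D=x^{2}-y$), after which one compares coefficients of $x^{i}(wy)^{t}$; the left side of \eqref{eq5b} is $w$-free, which is why $t\neq 0$ forces the sum to vanish.
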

\begin{proof}
We set $m=0$ in Equations \ref{eqxa}and \ref{eqx}, so that
{\allowdisplaybreaks
\begin{equation*}
\left (
\begin{matrix}
a & b \\
c & d
\end{matrix}
\right )^{n}
=
\sum_{j=0}^{n}
\binom{n}{j} w^{n-j}
\left (
\begin{matrix}
a-w & b  \\
c   & d-w
\end{matrix}
\right )^{j}.
\end{equation*}
}
If we use Theorem \ref{t1} and compare values in the $(1,2)$ position, we
deduce that
{\allowdisplaybreaks
\begin{multline}\label{eq5b}
 \sum_{i=0}^{\lfloor (n-1)/2 \rfloor}\binom{n-1-i}{i}(a+d)^{n-1-2
i}(bc-ad)^{i}=\\
\sum_{j=1}^{n}
\binom{n}{j} w^{n-j}\sum_{r=0}^{\lfloor (j-1)/2 \rfloor}\binom{j-1-r}{r}(a+d-2w)^{j-1-2
r}(bc-(a-w)(d-w))^{r}.
\end{multline}
}
For ease of notation we replace $a+d$ by $T$ and $ad-bc$ by $D$.
If we then expand the right side and collect powers of $w$, we get
that
{\allowdisplaybreaks
\begin{multline*}
 \sum_{i=0}^{\lfloor (n-1)/2 \rfloor}\binom{n-1-i}{i}T^{n-1-2
i}(-D)^{i}=\\
\sum_{j=1}^{n}
\sum_{r=0}^{\lfloor (j-1)/2\rfloor}
\sum_{k=0}^{j-1-2r}
\sum_{m=0}^{r}
\sum_{p=0}^{m}
\binom{n}{j}
 \binom{j-1-r}{r}
\binom{j-1-2r}{k}
\binom{r}{m}
\binom{m}{p} \\
\times(-1)^{k+r-m+p} 2^{k} T^{j-1-2r-k+m-p} D^{r-m} w^{n-j+k+m+p}.
\end{multline*}
}
If we solve $t=n-j+k+m+p$ for $p$, we have that
 {\allowdisplaybreaks
\begin{align*}
\sum_{i=0}^{\lfloor (n-1)/2 \rfloor}&\binom{n-1-i}{i}T^{n-1-2
i}(-D)^{i} \\
&=\sum_{t=0}^{n-1}w^{t} \sum_{j=1}^{n} \sum_{r=0}^{\lfloor (j-1)/2\rfloor}
\sum_{k=0}^{j-1-2r} \sum_{m=0}^{r}
 \binom{n}{j}
 \binom{j-1-r}{r}
\binom{j-1-2r}{k}
\binom{r}{m}\notag \\
&\times \binom{m}{j-k-m-n+t} (-1)^{r+t+j+n} 2^{k} T^{2m+n-t-2r-1}
D^{r-m}.\notag
\end{align*}
} Next, cancel a factor of $T^{n-1}$ from both sides, replace $T$
by $1/y$, $D$ by $-x/y^{2}$ and $m$ by $r-i$ to get that
{\allowdisplaybreaks
\begin{align*}
\sum_{i=0}^{\lfloor (n-1)/2 \rfloor}&\binom{n-1-i}{i}x^{i} \\
&=\sum_{t=0}^{n-1}(wy)^{t}\sum_{j=1}^{n} \sum_{r=0}^{\lfloor (j-1)/2\rfloor}
\sum_{k=0}^{j-1-2r} \sum_{i=0}^{r}
 \binom{n}{j}
 \binom{j-1-r}{r}
\binom{j-1-2r}{k}
\binom{r}{r-i}\notag \\
&\phantom{aasdasdasdasdsasd}\times \binom{r-i}{j-k-r+i-n+t} (-1)^{r+t+j+n+i} 2^{k}
x^{i}\notag \\
&\phantom{as}\notag\\
&=\sum_{t=0}^{n-1}(wy)^{t}\sum_{i=0}^{\lfloor (n-1)/2\rfloor}\sum_{j=1}^{n}
\sum_{r=i}^{\lfloor (j-1)/2\rfloor} \sum_{k=0}^{j-1-2r}
 \binom{n}{j}
 \binom{j-1-r}{r}
\binom{j-1-2r}{k}
\binom{r}{i}\notag \\
&\phantom{aasdsadaadsasddasd}\times \binom{r-i}{j-k-r+i-n+t} (-1)^{r+t+j+n+i} 2^{k}
x^{i}\notag \\
&\phantom{as}\notag
\end{align*}
}
Finally, compare coefficients of $x^{i}(wy)^{t}$ to get the result.
\end{proof}

\begin{corollary}
For every complex number $w$ different from 0, 1/2, $\phi$ (the golden ratio)
 or $-1/\phi$,
\begin{equation}\label{eq5g}
F_{n}= \sum_{j=1}^{n}\sum_{r=0}^{\lfloor (j-1)/2 \rfloor}
\binom{n}{j} \binom{j-1-r}{r}w^{n-j}(1-2w)^{j-1-2
r}(1+w-w^{2})^{r}.
\end{equation}
\end{corollary}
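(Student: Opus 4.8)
The plan is to specialize the $(1,2)$-entry identity \eqref{eq5b}, established inside the proof of Proposition~\ref{prop2}, to the Fibonacci matrix $A=\left(\begin{matrix}1&1\\1&0\end{matrix}\right)$, whose $n$-th power has $(1,2)$ entry $F_{n}$ by \eqref{eq2a}. Recall that \eqref{eq5b} arises from writing $A^{n}=\sum_{j=0}^{n}\binom{n}{j}w^{n-j}(A-wI)^{j}$ and computing the $(1,2)$ entry of each side via Theorem~\ref{t1}; equivalently, one applies Theorem~\ref{t1} once to $A$ and once to $A-wI$ and compares entries.

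First I would set $a=b=c=1$ and $d=0$ in \eqref{eq5b}. On the left, $a+d=1$ and $bc-ad=1$, so the left-hand side collapses to $\sum_{i=0}^{\lfloor (n-1)/2\rfloor}\binom{n-1-i}{i}$, which equals $F_{n}$ by \eqref{eq2b}. On the right, $a+d-2w=1-2w$ and the quantity $bc-(a-w)(d-w)$ becomes $1-(1-w)(-w)=1+w-w^{2}$; equivalently, $A-wI=\left(\begin{matrix}1-w&1\\1&-w\end{matrix}\right)$ has trace $1-2w$ and determinant $w^{2}-w-1$, so $-D=1+w-w^{2}$. Substituting these values, the right-hand side of \eqref{eq5b} turns into precisely the double sum on the right of \eqref{eq5g} (the $j=0$ term drops out since its inner sum is empty), and the corollary follows.

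Since every manipulation is an identity of polynomials in $w$, this argument in fact establishes \eqref{eq5g} for \emph{all} $w\in\mathbb{C}$; the excluded values $0,1/2,\phi,-1/\phi$ are only what one would need if the expansion of $(A-wI)^{j}$ were instead routed through the invertible, distinct-eigenvalue case of Williams' formula \eqref{eq1} (invertibility of $A-wI$ forcing $w\neq\phi,-1/\phi$), and they may safely be dropped. I do not foresee a genuine obstacle; the one point that calls for care is the sign in $bc-(a-w)(d-w)=1+w-w^{2}$, since Theorem~\ref{t1} is phrased in terms of $-D$ rather than $D$, together with recognising the collapsed left-hand side as $F_{n}$ through \eqref{eq2b}.
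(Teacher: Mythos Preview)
Your proposal is correct and follows essentially the same route as the paper: substitute $a=b=c=1$, $d=0$ into \eqref{eq5b} and invoke \eqref{eq2b} to recognise the left-hand side as $F_{n}$. Your additional remark that the identity is in fact a polynomial identity in $w$, so that the exclusions $0,\,1/2,\,\phi,\,-1/\phi$ are unnecessary, is a correct observation that goes slightly beyond the paper's terse proof.
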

\begin{proof}
Let $a=b=c=1$, $d=0$ in Equation \ref{eq5b} and once again use the
identity at \eqref{eq2b}.
\end{proof}
This identity contains several of the known identities for the Fibonacci sequence
as special cases. For example, letting  $w \to \phi$  gives Binet's Formula
 (after summing using the Binomial Theorem);
$w \to 0$ gives the formula at \eqref{eq2b}; $w \to 1/2$ gives \eqref{eqvaj}; $w=1$
gives the identity
\begin{equation*}
F_{n}= \sum_{j=1}^{n}(-1)^{j-1}\binom{n}{j} F_{j},
\end{equation*}
which is a special case of Formula 46 from Vajda's list
(\cite{V89}, page 179).

\vskip 30pt

\section{Commutating Matrices II}

Let
$A=\left (
\begin{matrix}
a & b  \\
c   & d
\end{matrix}
\right )$ with $T=a+d$ and $D=ad-bc$  and $I$ the $2\times2$ identity matrix as before.
 Let $g$ be an arbitrary real or complex number such that $g^{2}+Tg+D \not = 0$.
It is easy to check that
{\allowdisplaybreaks
\begin{equation}\label{eq61}
A=\frac{1}{g^{2}+Tg+D}(A+g\,I)(g\,A+D\,I),
\end{equation}
}
and that $A+g\,I$ and $g\,A+D\,I$ commute.
If both sides of Equation \ref{eq61} are now raised to the $n$-th power, the right side
expanded via the Binomial Theorem and powers of $A$ collected, we get the following proposition:

\begin{proposition}\label{prop3}
Let $A$ be an arbitrary $2\times2$ matrix with trace $T$ and
determinant $D\not = 0$. Let $g$ be a complex number such that
$g^{2}+Tg+D \not = 0$, $g \not =0$  and let $n$ be a positive
integer. Then
{\allowdisplaybreaks
\begin{equation}\label{eq62}
A^{n}=\left (\frac{g\,D}{g^{2}+Tg+D}\right )^{n}
\sum_{r=0}^{2n}
\sum_{i=0}^{r}
\binom{n}{i}
\binom{n}{r-i}
\left(\frac{D}{g^{2}}\right)^{i}
\left (\frac{g}{D}\right)^{r}
 A^{r}.
\end{equation}
}
\end{proposition}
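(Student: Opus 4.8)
The plan is to start from the factorization in Equation~\ref{eq61}, namely $A = \frac{1}{g^2+Tg+D}(A+gI)(gA+DI)$, which is valid whenever $g^2+Tg+D\neq 0$; one can verify it directly by expanding $(A+gI)(gA+DI) = gA^2 + DA + g^2 A + gD I$ and using the Cayley--Hamilton relation $A^2 = TA - DI$, which gives $(A+gI)(gA+DI) = (g^2+Tg+D)A$. The two factors $A+gI$ and $gA+DI$ commute because each is a polynomial in $A$. Raising both sides to the $n$-th power and using the commutativity yields
\begin{equation*}
A^n = \frac{1}{(g^2+Tg+D)^n}(A+gI)^n (gA+DI)^n.
\end{equation*}

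Next I would expand each factor by the Binomial Theorem: $(A+gI)^n = \sum_{i=0}^n \binom{n}{i} g^{n-i} A^i$ and $(gA+DI)^n = \sum_{j=0}^n \binom{n}{j} g^j D^{n-j} A^{n-j}$. Multiplying these two sums gives a double sum over $i$ and $j$ of $\binom{n}{i}\binom{n}{j} g^{n-i+j} D^{n-j} A^{i + n - j}$. The goal is to collect the coefficient of each power $A^r$; the substitution to make this match the stated form is to reindex by $r$ (the total power of $A$) and one inner index. Writing $A^r$ with $r$ ranging over $0,\dots,2n$ and pulling out the overall factor $\left(\frac{gD}{g^2+Tg+D}\right)^n$, I need to show that the remaining coefficient of $A^r$ is $\sum_{i=0}^r \binom{n}{i}\binom{n}{r-i}\left(\frac{D}{g^2}\right)^i \left(\frac{g}{D}\right)^r$.

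The bookkeeping step is the main obstacle, though it is routine rather than deep: one must track the powers of $g$ and $D$ carefully through the reindexing. Concretely, in the product the term with $A^i$ from the first factor and $A^{n-j}$ from the second contributes to $A^r$ precisely when $i + n - j = r$, i.e. $j = n - r + i$; the constraint $0\le j\le n$ forces $\max(0, r-n)\le i \le \min(n, r)$, but the binomial coefficients $\binom{n}{i}\binom{n}{r-i}$ vanish outside this range anyway (here using $\binom{n}{j} = \binom{n}{n-j} = \binom{n}{r-i}$), so the inner sum can be written as $\sum_{i=0}^r$. The power of $g$ is $g^{n-i+j} = g^{n - i + (n-r+i)} = g^{2n - r}$ and the power of $D$ is $D^{n-j} = D^{n - (n-r+i)} = D^{r-i}$, so the coefficient of $A^r$ is $g^{2n-r}\sum_i \binom{n}{i}\binom{n}{r-i} D^{r-i}$. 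Factoring out $(gD)^n$ to match the prefactor leaves $g^{2n-r}D^{r-i}/(g^nD^n) = g^{-i}\cdot g^{n-r}\cdot D^{-i} = \left(\tfrac{D}{g^2}\right)^i\left(\tfrac{g}{D}\right)^r$ after a short check, which is exactly the summand claimed. Assembling these pieces and noting that $g\neq 0$ and $D\neq 0$ guarantee all the displayed fractions make sense completes the proof.

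\begin{proof}
By the Cayley--Hamilton theorem, $A^2 = TA - DI$, and hence
\begin{equation*}
(A+gI)(gA+DI) = gA^2 + (D+g^2)A + gD\,I = g(TA-DI) + (D+g^2)A + gD\,I = (g^2+Tg+D)A.
\end{equation*}
Since $g^2+Tg+D\neq 0$, this establishes Equation~\ref{eq61}, and the two factors $A+gI$, $gA+DI$ commute since both are polynomials in $A$. Raising Equation~\ref{eq61} to the $n$-th power and using commutativity,
\begin{equation*}
A^n = \frac{1}{(g^2+Tg+D)^n}(A+gI)^n(gA+DI)^n.
\end{equation*}
Expanding each factor by the Binomial Theorem and multiplying,
\begin{equation*}
(A+gI)^n(gA+DI)^n = \sum_{i=0}^n\sum_{j=0}^n \binom{n}{i}\binom{n}{j} g^{n-i+j} D^{n-j} A^{i+n-j}.
\end{equation*}
Collect terms by the total power $r := i + n - j$ of $A$, so $j = n-r+i$; replacing $\binom{n}{j} = \binom{n}{n-j} = \binom{n}{r-i}$ (which vanishes unless $0\le r-i\le n$, automatically restricting the range of $i$), we obtain
\begin{equation*}
(A+gI)^n(gA+DI)^n = \sum_{r=0}^{2n}\left(\sum_{i=0}^r \binom{n}{i}\binom{n}{r-i} g^{2n-r} D^{r-i}\right) A^r.
\end{equation*}
Dividing by $(g^2+Tg+D)^n$ and writing $g^{2n-r}D^{r-i} = (gD)^n\left(\tfrac{D}{g^2}\right)^i\left(\tfrac{g}{D}\right)^r$ (valid since $g\neq 0$, $D\neq 0$), we arrive at Equation~\ref{eq62}.
\end{proof}
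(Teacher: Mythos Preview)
Your approach is exactly the one the paper intends: start from Equation~\ref{eq61}, raise to the $n$-th power, expand each factor binomially, and collect powers of $A$. However, the bookkeeping in your formal proof contains two errors that happen \emph{not} to cancel; you reach the correct formula only because you already knew the target.

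First, the displayed double sum for $(A+gI)^n(gA+DI)^n$ is wrong. With $(A+gI)^n=\sum_i\binom{n}{i}g^{n-i}A^i$ and $(gA+DI)^n=\sum_j\binom{n}{j}g^{j}D^{n-j}A^{j}$ (note: the exponent of $A$ here must match that of $g$, since both come from the factor $(gA)^j$), the product is
\[
\sum_{i=0}^{n}\sum_{j=0}^{n}\binom{n}{i}\binom{n}{j}\,g^{\,n-i+j}D^{\,n-j}A^{\,i+j},
\]
with $A^{i+j}$, not $A^{i+n-j}$. Your version in the plan, $(gA+DI)^n=\sum_j\binom{n}{j}g^{j}D^{n-j}A^{n-j}$, is inconsistent (the $g$- and $A$-exponents coming from $(gA)^{?}$ must agree).

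Second, the algebraic identity you assert at the end,
\[
g^{2n-r}D^{r-i}=(gD)^{n}\Bigl(\tfrac{D}{g^{2}}\Bigr)^{i}\Bigl(\tfrac{g}{D}\Bigr)^{r},
\]
is false: the right-hand side equals $g^{\,n+r-2i}D^{\,n-r+i}$, which matches the left-hand side only when $2r-2i=n$. The correct computation uses $r=i+j$, giving exponents $g^{\,n+r-2i}D^{\,n-r+i}$, and \emph{then} the factorization $(gD)^n(D/g^2)^i(g/D)^r$ is genuinely valid. With these two fixes your argument goes through and coincides with the paper's.
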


\begin{corollary}
Let $n$ be a positive integer and let $m$ be an integer with $0
\leq m \leq 2n$. Then for $ -n \leq w \leq n $,
\begin{multline}\label{eq63}
\sum_{k=0}^{n-1} \binom{n-1-k}{k} \binom{n}{w+k}
\binom{k+w}{m-k-w}(-1)^{k}=\\
 \sum_{k=-2w - n + m + 1}^{m - w} \binom{n}{k+w}
\binom{n}{n+k+w-m} \binom{k+n+2w-m-1}{k}(-1)^{k} .
\end{multline}
\end{corollary}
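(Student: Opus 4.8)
The identity \eqref{eq63} is a comparison of the coefficients of one monomial in a three‑variable polynomial identity, so the plan is to produce that polynomial identity from Proposition~\ref{prop3} and then read off the two sides. Apply \eqref{eq62} to the companion matrix $A=\left(\begin{matrix}0&1\\-D&T\end{matrix}\right)$, which has trace $T$, determinant $D$, and $(1,2)$‑entry $1$, so that Theorem~\ref{t1} gives $(A^{p})_{12}=y_{p-1}$ for every $p\ge 1$. Multiply \eqref{eq62} by $(g^{2}+Tg+D)^{n}$, use
\[
\left(\frac{gD}{g^{2}+Tg+D}\right)^{n}\left(\frac{D}{g^{2}}\right)^{i}\left(\frac{g}{D}\right)^{r}(g^{2}+Tg+D)^{n}=g^{\,n-2i+r}D^{\,n+i-r},
\]
and equate $(1,2)$‑entries to obtain
\[
y_{n-1}\,(g^{2}+Tg+D)^{n}=\sum_{r=0}^{2n}\sum_{i=0}^{r}\binom{n}{i}\binom{n}{r-i}g^{\,n-2i+r}D^{\,n+i-r}\,y_{r-1}.
\]
Although \eqref{eq62} was derived under $g\neq 0$, $D\neq 0$, $g^{2}+Tg+D\neq 0$, both sides here are polynomials in $g$, $T$, $D$, so the identity holds identically; it is isobaric of weight $3n-1$ under $\deg g=\deg T=1$, $\deg D=2$.

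Next I would substitute the closed form $y_{p-1}=\sum_{j}\binom{p-1-j}{j}T^{\,p-1-2j}(-D)^{j}$ on both sides and expand the left‑hand factor by the multinomial theorem as $(g^{2}+Tg+D)^{n}=\sum_{p+q+s=n}\frac{n!}{p!\,q!\,s!}\,g^{\,2p+q}T^{\,q}D^{\,s}$, and then compare the coefficients of the monomial $g^{\,2n-m}\,T^{\,n+2w-m-1}\,D^{\,m-w}$. On the right, fixing the exponents of $g$ and $T$ forces $r=n-m+2i$ and $i=w+j$, so the triple sum collapses to a single sum over $k:=j$, whose summand works out to $\binom{n}{k+w}\binom{n}{n+k+w-m}\binom{k+n+2w-m-1}{k}(-1)^{k}$ — exactly the right‑hand side of \eqref{eq63}. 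On the left, fixing the same two exponents forces $p=n-k-w$, $q=2k+2w-m$, $s=m-k-w$ (with $k$ now the summation index of $y_{n-1}$), so the left side collapses to $\sum_{k}\binom{n-1-k}{k}\,\frac{n!}{(n-k-w)!\,(2k+2w-m)!\,(m-k-w)!}\,(-1)^{k}$; since
\[
\frac{n!}{(n-k-w)!\,(2k+2w-m)!\,(m-k-w)!}=\binom{n}{k+w}\binom{k+w}{m-k-w},
\]
this is the left‑hand side of \eqref{eq63}. The exponent $m-w$ of $D$ is then forced to agree on both sides by isobaricity.

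The main obstacle is not any single hard step but the bookkeeping. One must (i) carry out the factorization of the multinomial coefficient into $\binom{n}{k+w}\binom{k+w}{m-k-w}$ displayed above, and (ii) check the summation bounds: writing the left sum as $\sum_{k=0}^{n-1}$ and the right sum over $-2w-n+m+1\le k\le m-w$, as in \eqref{eq63}, only adds terms that vanish under the convention $\binom{x}{y}=0$ for $y<0$ or $y>x$, given the hypotheses $0\le m\le 2n$ and $-n\le w\le n$ (and when one of $2n-m$, $n+2w-m-1$, $m-w$ is negative the assertion reduces to $0=0$). Once the correct monomial $g^{\,2n-m}T^{\,n+2w-m-1}D^{\,m-w}$ is identified, the remaining work is mechanical coefficient extraction.
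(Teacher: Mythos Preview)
Your proposal is correct and follows essentially the same route as the paper: clear denominators in Proposition~\ref{prop3}, pass to the $(1,2)$-entry via Theorem~\ref{t1} (the paper uses a general $A$ with $b\neq 0$, you use the companion matrix, which is immaterial), and then extract coefficients. The only organizational difference is that the paper does the coefficient extraction in two stages---first in $g$ to reach the matrix identity \eqref{eq6a}, then in $D/T^{2}$ after taking $(1,2)$-entries---whereas you expand $(g^{2}+Tg+D)^{n}$ by the multinomial theorem and pick off the coefficient of $g^{2n-m}T^{n+2w-m-1}D^{m-w}$ in one shot; the two procedures yield the same summands after the identification $\tfrac{n!}{p!\,q!\,s!}=\binom{n}{k+w}\binom{k+w}{m-k-w}$ you display.
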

Remark: Consideration of the binomial coefficients on either side
of \eqref{eq63} shows that this identity is non-trivial only for
$m/2-\lfloor (n-1)/2 \rfloor \leq w \leq m$, but we prefer to
state the limits on $w$ as we have done for neatness reasons.
\begin{proof}
If  Equation \ref{eq62} is multiplied by $(g^{2}+Tg+D)^{n}$, both
sides  expanded  and coefficients of like powers of $g$ compared,
we get that for each integer $m \in [0,\,\,2 n]$,
{\allowdisplaybreaks
\begin{multline}\label{eq6a}
A^{n} \sum_{j=0}^{n}\binom{n}{j}\binom{j}{m-j}D^{n-j}T^{2j-m}=\\
\sum_{r=0}^{2n}\frac{1+(-1)^{r+m+n}}{2}
\binom{n}{\frac{r-m+n}{2}}\binom{n}{\frac{r+m-n}{2}}
D^{(3n-r-m)/2}A^{r}.
\end{multline}
}
If we next assume that $(1,2)$ entry of $A$ is non-zero, apply
Theorem \ref{t1} to the various powers of $A$ on each side of
Equation \ref{eq6a} and compare the $(1,2)$ entries on each side
of \eqref{eq6a}, we get that
{\allowdisplaybreaks
\begin{multline}\label{eq6b}
\sum_{k=0}^{\lfloor (n-1)/2
\rfloor}\binom{n-1-k}{k}T^{n-1-2k}(-D)^{k}
 \sum_{j=0}^{n}\binom{n}{j}\binom{j}{m-j}D^{n-j}T^{2j-m}=\\
\sum_{r=0}^{2n}\frac{1+(-1)^{r+m+n}}{2}
\binom{n}{\frac{r-m+n}{2}}\binom{n}{\frac{r+m-n}{2}}
D^{(3n-r-m)/2}
 \sum_{k=0}^{\lfloor \frac{r-1}{2}
\rfloor}\binom{r-1-k}{k}T^{r-1-2k}(-D)^{k}.
\end{multline}
}
Equivalently, after re-indexing and changing the order of
summation,
 {\allowdisplaybreaks
\begin{multline}\label{eq6c}
\sum_{v=0}^{2n}\sum_{k=0}^{n}
 \binom{n-1-k}{k}
 \binom{n}{k+n-v}
 \binom{k+n-v}{m-k-n+v}
 (-1)^{k}
 \left (
 \frac{D}{T^{2}}
 \right)^{v}\\
=\sum_{v=0}^{2n} \sum_{k= 2v-3n +m+1 }^{ v+m-n } \binom{n}{k-v+n}
\binom{n}{k-v+2n-m} \\
\times \binom{k-2v+3n-m-1}{k}(-1)^{k} \left
( \frac{D}{T^{2}} \right)^{v}.
\end{multline}
} The result follows from comparing coefficients of equal powers
of $D/T^{2}$ and replacing $v$ by $n-w$.
\end{proof}

\begin{corollary}\label{corspec}
Let $n$ be a positive integer and let $m \in \{0,\,1\,\dots ,
2n\}$. Then
{\allowdisplaybreaks
\begin{equation}\label{eq6d1}
\sum_{j=0}^{n}\binom{n}{j}\binom{j}{m-j}2^{2j-m}=
\sum_{\substack{r=0,\\
r+m+n\,\, even}}^{2n}
\binom{n}{\frac{r-m+n}{2}}\binom{n}{\frac{r+m-n}{2}},
\end{equation}
}
{\allowdisplaybreaks
\begin{equation}\label{eq6d2}
(n+1)\sum_{j=0}^{n}\binom{n}{j}\binom{j}{m-j}2^{2j-m}=
\sum_{\substack{r=0,\\
r+m+n\,\, even}}^{2n}
\binom{n}{\frac{r-m+n}{2}}\binom{n}{\frac{r+m-n}{2}}(r+1),
\end{equation}
}
{\allowdisplaybreaks
\begin{multline}\label{eq6d3}
(2^{n+1}-1)\sum_{j=0}^{n}\binom{n}{j}\binom{j}{m-j}2^{n-j}3^{2j-m}=\\
\sum_{\substack{r=0,\\
r+m+n\,\, even}}^{2n}
\binom{n}{\frac{r-m+n}{2}}\binom{n}{\frac{r+m-n}{2}}2^{(3n-r-m)/2}(2^{r+1}-1),
\end{multline}
}
and
{\allowdisplaybreaks
\begin{equation}\label{eq6d4}
F_{n+2}\sum_{j=0}^{n}\binom{n}{j}\binom{j}{m-j}(-1)^{m+j}=
\sum_{\substack{r=0,\\
r+m+n\,\, even}}^{2n}
\binom{n}{\frac{r-m+n}{2}}\binom{n}{\frac{r+m-n}{2}}(-1)^{(-n+r-m)/2}F_{r+2}.
\end{equation}
}
\end{corollary}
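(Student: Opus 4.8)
The plan is to specialize Equation \eqref{eq6a} to particular matrices $A$ whose powers $A^r$ are known explicitly, then read off each of the four identities by comparing a scalar invariant (trace, or the $(1,1)$ entry, or the ratio of eigenvalue powers) on the two sides. The key observation is that \eqref{eq6a} is a matrix identity valid for \emph{any} $2\times2$ matrix, so any linear functional applied to both sides yields a scalar identity; the art is choosing $A$ so that the resulting $T$, $D$, and the functional values of $A^r$ collapse the binomial sums into the stated form.

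For \eqref{eq6d1} I would take $A=I$, so $T=2$, $D=1$, and $A^r=I$ for all $r$; applying the trace (or just reading the $(1,1)$ entry and dividing by the common scalar, which is $1$) kills the powers of $D/T^2$ except at the right place and forces the parity constraint $r+m+n$ even via the factor $(1+(-1)^{r+m+n})/2$. For \eqref{eq6d2} I would instead take $A=\left(\begin{smallmatrix}1&1\\0&1\end{smallmatrix}\right)$, which also has $T=2$, $D=1$, but now $A^r=\left(\begin{smallmatrix}1&r\\0&1\end{smallmatrix}\right)$; reading the $(1,2)$ entry brings in the factor $r$ on the right and $j$-independent data on the left, and after rescaling one gets the $(r+1)$ and $(n+1)$ weights — alternatively $A=\left(\begin{smallmatrix}1&1\\0&1\end{smallmatrix}\right)$ gives $\operatorname{tr}A^r=2$ while the $(1,2)$ entry gives $r$, and combining trace-plus-$(1,2)$ (equivalently using $A+I$ or differentiating) produces the $(r+1)$ factor cleanly. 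For \eqref{eq6d3} the natural choice is a matrix with eigenvalues $1$ and $2$, e.g.\ $A=\left(\begin{smallmatrix}2&0\\0&1\end{smallmatrix}\right)$ or $A=\left(\begin{smallmatrix}1&1\\0&2\end{smallmatrix}\right)$, so $T=3$, $D=2$, and $A^r$ has trace $2^r+1$; the factor $2^{(3n-r-m)/2}$ in \eqref{eq6a} is exactly $D^{(3n-r-m)/2}$, and $2^{n+1}-1 = \operatorname{tr}A^n$ up to the normalization, giving the stated shape. For \eqref{eq6d4} I would take $A=\left(\begin{smallmatrix}1&1\\1&0\end{smallmatrix}\right)$, the Fibonacci matrix, so $T=1$, $D=-1$; then $\operatorname{tr}A^r = F_{r+1}+F_{r-1} = L_r$, but to get $F_{r+2}$ one uses the $(1,1)$ entry $F_{r+1}$ together with $F_r$, or more simply observes $F_{n+2}=\operatorname{tr}(A^n)\cdot(\text{something})$; in any case $(-1)^{(3n-r-m)/2}=(-D)^{\dots}$ and the sum on the left carries $(-1)^{m+j}$ from $T^{2j-m}(-D)^{\,n-j}$ with $T=1$, $D=-1$, so that $D^{n-j}=(-1)^{n-j}$ and the exponent bookkeeping produces $(-1)^{m+j}$.

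Concretely, the uniform recipe is: start from \eqref{eq6a}, substitute the chosen numeric $T$ and $D$, then apply the chosen scalar functional $\ell$ (trace, or $(1,1)$-entry, or $(1,2)$-entry) to both sides using $\ell(A^n)$ and $\ell(A^r)$ as known closed forms, and finally simplify. On the left the sum $\sum_j \binom{n}{j}\binom{j}{m-j}D^{n-j}T^{2j-m}$ is just reproduced with the numeric values plugged in; on the right the parity indicator $(1+(-1)^{r+m+n})/2$ becomes the restriction ``$r+m+n$ even'' written under the summation sign, and $D^{(3n-r-m)/2}$ becomes the displayed power of $2$ (for \eqref{eq6d3}) or of $-1$ (for \eqref{eq6d4}) or disappears (for \eqref{eq6d1}, \eqref{eq6d2} where $D=1$). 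The weights $1$, $r+1$, $2^{r+1}-1$, $F_{r+2}$ are precisely the values $\ell(A^r)$ for the four choices, and the prefactors $1$, $n+1$, $2^{n+1}-1$, $F_{n+2}$ are the corresponding $\ell(A^n)$, which appear because $A^n$ multiplies the whole left-hand side of \eqref{eq6a}.

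The main obstacle I anticipate is getting the sign and parity exponents in \eqref{eq6d3} and especially \eqref{eq6d4} to match exactly: in \eqref{eq6a} the power $D^{(3n-r-m)/2}$ and the original $D^{n-j}T^{2j-m}$ must be reconciled with the numeric substitutions so that, for instance, with $T=1,D=-1$ one genuinely obtains $(-1)^{m+j}$ on the left and $(-1)^{(-n+r-m)/2}$ on the right rather than these expressions times a spurious global sign depending on $n$ or $m$. This is pure exponent arithmetic modulo $2$, but it is the only place where a careless step would break the identity. A secondary, milder point is justifying the functional choice for the weight $r+1$ in \eqref{eq6d2}: one must check that the relevant entry of $A^r$ for a rank-one-nilpotent-plus-identity matrix is indeed affine-linear in $r$ with the right slope, which is immediate from $\left(\begin{smallmatrix}1&1\\0&1\end{smallmatrix}\right)^r=\left(\begin{smallmatrix}1&r\\0&1\end{smallmatrix}\right)$ but should be stated. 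Once the matrices are fixed, everything else is mechanical comparison of coefficients, already licensed by the derivation of \eqref{eq6a} in the previous corollary.
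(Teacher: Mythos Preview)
Your strategy is exactly the paper's: specialize the matrix identity \eqref{eq6a} to a concrete $A$ with the right $T,D$ and read off a scalar identity via a linear functional on $M_2$. The paper simply makes sharper tactical choices: it always reads the $(1,1)$ entry, picking matrices so that this entry of $A^r$ is literally the desired weight. Concretely it uses $I$, $\left(\begin{smallmatrix}2&1\\-1&0\end{smallmatrix}\right)$, $\left(\begin{smallmatrix}3&1\\-2&0\end{smallmatrix}\right)$, and $\left(\begin{smallmatrix}-2&-1\\1&1\end{smallmatrix}\right)$, whose $(1,1)$ entries at power $r$ are $1$, $r+1$, $2^{r+1}-1$, and $(-1)^r F_{r+2}$ respectively; with these the identities drop out with no further combination of entries and no sign bookkeeping beyond absorbing a global $(-1)^n$ in the Fibonacci case.

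Your own choices work in principle but two of your stated functionals are off. For \eqref{eq6d2}, the $(1,2)$ entry of $\left(\begin{smallmatrix}1&1\\0&1\end{smallmatrix}\right)^r$ is $r$, not $r+1$, and no ``rescaling'' turns $r$ into $r+1$; you need the functional $(1,1)+(1,2)$ (or just use the paper's matrix). For \eqref{eq6d3}, $\operatorname{tr}\!\left(\begin{smallmatrix}2&0\\0&1\end{smallmatrix}\right)^r = 2^r+1$, not $2^{r+1}-1$; to get $2^{r+1}-1$ from a matrix with eigenvalues $1,2$ you need a functional like $2\cdot(1,1)-(2,2)$, or again switch to the paper's matrix. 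These are easily repaired and do not affect the soundness of the plan, but as written those two sentences are incorrect.
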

\begin{proof}
Equations \eqref{eq6d1} to \eqref{eq6d4} follow from comparing
$(1,1)$ entries at \eqref{eq6a}, using, respectively, the
following identities:
 {\allowdisplaybreaks
\begin{align*}
I^{n}&=I,\\
&\phantom{as}\\
 \left (
\begin{matrix}
2 & 1  \\
-1   & 0
\end{matrix}
\right )^{n}&= \left (
\begin{matrix}
n+1 & n  \\
-n   & -n+1
\end{matrix}
\right ),\\
&\phantom{as}\\
\left (
\begin{matrix}
3 & 1  \\
-2   & 0
\end{matrix}
\right )^{n}&= \left (
\begin{matrix}
2^{n+1}-1 & 2^{n}-1  \\
-2^{n+1}+2   & -2^{n}+2
\end{matrix}
\right ),
\\&\phantom{as}\\
\left (
\begin{matrix}
-2 & -1  \\
1   & 1
\end{matrix}
\right )^{n}&= (-1)^{n}\left (
\begin{matrix}
F_{n+2} & F_{n}  \\
-F_{n}   & -F_{n-2}
\end{matrix}
\right ).
\end{align*}
}
\end{proof}
\begin{corollary}
If $g$ is a complex number  different from $0$, $-\phi$ or $1/\phi$, then
{\allowdisplaybreaks
\begin{equation}
F_{n} = \left (\frac{-g}{g^{2}+g-1}\right )^{n}
\sum_{r=0}^{2n}
\sum_{i=0}^{r}
\binom{n}{i}
\binom{n}{r-i}
\left ( -1\right)^{r+i}g^{r-2i}
 F_{r}.
\end{equation}
}
\end{corollary}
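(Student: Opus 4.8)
The plan is to specialize Proposition~\ref{prop3} to the Fibonacci matrix and then read off the $(1,2)$ entry. First I would take $A = \left(\begin{matrix} 1 & 1 \\ 1 & 0 \end{matrix}\right)$, so that $T = 1$ and $D = ad - bc = -1 \neq 0$. The side condition $g^{2} + Tg + D \neq 0$ becomes $g^{2} + g - 1 \neq 0$, whose roots are exactly $\phi - 1 = 1/\phi$ and $-\phi$; together with $g \neq 0$ this matches the hypotheses of the corollary. Substituting $D = -1$ into the prefactor $\left(\frac{gD}{g^{2}+Tg+D}\right)^{n}$ yields $\left(\frac{-g}{g^{2}+g-1}\right)^{n}$, and the factors $\left(\frac{D}{g^{2}}\right)^{i}\left(\frac{g}{D}\right)^{r} = (-1)^{i} g^{-2i} \cdot (-1)^{r} g^{r} = (-1)^{r+i} g^{r-2i}$, which is precisely the summand coefficient appearing in the claimed identity.

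Next I would invoke the matrix identity~\eqref{eq2a}, namely $A^{k} = \left(\begin{matrix} F_{k+1} & F_{k} \\ F_{k} & F_{k-1}\end{matrix}\right)$, applied to both $A^{n}$ on the left of~\eqref{eq62} and to each $A^{r}$ on the right. Reading the $(1,2)$ entries gives
\[
F_{n} = \left(\frac{-g}{g^{2}+g-1}\right)^{n}\sum_{r=0}^{2n}\sum_{i=0}^{r}\binom{n}{i}\binom{n}{r-i}(-1)^{r+i}g^{r-2i}\,F_{r},
\]
which is the assertion. One must be slightly careful about the $r = 0$ term: $A^{0} = I$ has $(1,2)$ entry $0 = F_{0}$, so the convention $F_{0} = 0$ (stated in the introduction) keeps the formula correct even though Proposition~\ref{prop3} nominally only treats positive powers on the left; the expansion of $A = (\text{stuff})^{1}$ in~\eqref{eq62} already implicitly includes the $r=0$ contribution, so no separate argument is needed.

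The only genuine obstacle is bookkeeping rather than mathematics: one should check that Proposition~\ref{prop3}, as derived from~\eqref{eq61} by the Binomial Theorem, does not secretly require $A$ to be invertible in a way that fails here --- but $D = -1 \neq 0$, so $A$ is invertible and~\eqref{eq61} is valid, with $A + gI$ and $gA + DI$ commuting since both are polynomials in $A$. Everything else is direct substitution $T = 1$, $D = -1$, followed by entrywise comparison using~\eqref{eq2a}. I would therefore simply state: \emph{Set $A = \left(\begin{smallmatrix} 1 & 1 \\ 1 & 0\end{smallmatrix}\right)$ in Proposition~\ref{prop3}, so that $T = 1$ and $D = -1$; substitute these values into~\eqref{eq62}, apply~\eqref{eq2a} to $A^{n}$ and to each $A^{r}$, and compare the $(1,2)$ entries.} The excluded values of $g$ are exactly those for which the hypotheses of Proposition~\ref{prop3} fail.
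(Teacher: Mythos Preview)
Your proposal is correct and follows essentially the same route as the paper: specialize Proposition~\ref{prop3} to $A=\left(\begin{smallmatrix}1&1\\1&0\end{smallmatrix}\right)$ (so $T=1$, $D=-1$), apply~\eqref{eq2a} to the powers of $A$ on both sides of~\eqref{eq62}, and compare $(1,2)$ entries. Your additional remarks verifying the excluded values of $g$, simplifying the prefactor, and handling the $r=0$ term via $F_0=0$ are all accurate elaborations of what the paper leaves implicit.
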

\begin{proof}
Let $A=\left (
\begin{matrix}
1 & 1  \\
1   & 0
\end{matrix}
\right )$ and compare the $(1,2)$ entries on each side of Equation
\ref{eq62}, using \eqref{eq2a}.
\end{proof}

\vskip 30pt

\section{Miscellaneous identities derived from particular matrices}
The identities in Corollary \ref{corspec} derived from the fact that the
$n$-th power of each of the various  matrices mentioned in the proof
 has a simple, elegant form.
It is of course easy to construct such  $2\times 2$  matrices
whose $n$-th power  has a  similar
"nice" form by constructing matrices $\left (
\begin{matrix}
a & b  \\
c   & d
\end{matrix}
\right )$ with a predetermined pair of eigenvalues $\alpha$ and $\beta$
and using the formula of Williams at \eqref{eq1}.  Theorem \ref{t1}
 can then be applied to any such matrix to produce   identities of the form
{\allowdisplaybreaks
\begin{equation*}
 \sum_{i=0}^{\lfloor (n-1)/2 \rfloor}\binom{n-1-i}{i}(a+d)^{n-1-2
i}(-(a d - b c))^{i}=
\begin{cases}
\frac{\alpha^{n}-\beta^{n}}{\alpha-\beta},
\,\,\,\, \,\,\,\,\,\text{ if } \alpha \not = \beta,\\
\phantom{as} \\
n \alpha^{n-1},
\,\,\,\,\,\,\,\,\,\,\text{ if } \alpha = \beta.
\end{cases}
\end{equation*}
}
As an example, the matrix
$\left (
\begin{matrix}
2 & 1  \\
-1   & 0
\end{matrix}
\right )$ has both eigenvalues equal to 1, leading to the following identity
for $n \geq 1$:
{\allowdisplaybreaks
\begin{equation}
n=\sum_{i=0}^{\lfloor (n-1)/2 \rfloor}\binom{n-1-i}{i}2^{n-1-2
i}(-1)^{i}.
\end{equation}
}

We give one  additional example.

\begin{proposition}
Let $n$ be a  positive integer, and $s$ an integer with $0 \leq s \leq n-1$. Then
{\allowdisplaybreaks
\begin{equation}
\sum_{k \geq 0}\binom{s}{k}\binom{n-k-1}{s-1}(-1)^{k} = 0.
\end{equation}
}
\end{proposition}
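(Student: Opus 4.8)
The plan is to prove this directly, by recognising the alternating sum as an iterated finite difference of a low‑degree polynomial. First dispose of the degenerate case $s=0$: then $\binom{n-k-1}{s-1}=\binom{n-k-1}{-1}=0$ for every $k$, so the sum is $0$. Assume henceforth that $1\le s\le n-1$, and put
\[
p(k):=\binom{n-1-k}{s-1}=\frac{(n-1-k)(n-2-k)\cdots(n-s+1-k)}{(s-1)!},
\]
a polynomial in $k$ of degree $s-1$. One checks in a line that on the range $0\le k\le s$ --- the only values of $k$ for which $\binom{s}{k}\neq 0$, and a range on which $n-1-k$ runs through the non‑negative integers $n-1-s,\dots,n-1$ --- this polynomial agrees with the combinatorial binomial coefficient in the statement. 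Hence
\[
\sum_{k\ge 0}\binom{s}{k}(-1)^{k}\binom{n-k-1}{s-1}
=\sum_{k=0}^{s}\binom{s}{k}(-1)^{k}p(k)
=(-1)^{s}\,(\Delta^{s}p)(0),
\]
where $(\Delta f)(k)=f(k+1)-f(k)$; since the $s$-th finite difference annihilates every polynomial of degree strictly less than $s$, the sum is $0$.

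An essentially equivalent route, closer in flavour to the generating‑function bookkeeping used elsewhere in the paper, is to write $\binom{n-k-1}{s-1}=[z^{s-1}](1+z)^{n-1-k}$, move the finite sum over $k$ past the coefficient extraction $[z^{s-1}](\cdot)$, and sum a finite binomial series:
\[
\sum_{k\ge 0}\binom{s}{k}(-1)^{k}\binom{n-k-1}{s-1}
=[z^{s-1}]\,(1+z)^{n-1}\Bigl(1-\frac{1}{1+z}\Bigr)^{\!s}
=[z^{s-1}]\,z^{s}(1+z)^{\,n-1-s}.
\]
Because the hypothesis $s\le n-1$ makes $z^{s}(1+z)^{n-1-s}$ a genuine polynomial whose lowest‑order term is $z^{s}$, while we are asking for the coefficient of $z^{s-1}$, this vanishes. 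If one insists on the matrix idiom of the paper, the last display is precisely the comparison of the $(1,1)$ entries of $(M-I)^{s}M^{\,n-1-s}$ --- computed directly on one side and via $(M-I)^{s}=\sum_{k}\binom{s}{k}(-1)^{s-k}M^{k}$ on the other --- for $M=\left(\begin{matrix}1+z&1\\0&1\end{matrix}\right)$, whose $m$-th power has $(1,1)$ entry $(1+z)^{m}$; but the matrix plays no essential role here.

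I do not expect a genuine obstacle: there is no hard step. The only two points meriting a sentence are the verification that the polynomial formula for $\binom{n-k-1}{s-1}$ coincides with the combinatorial binomial coefficient throughout $0\le k\le s$ (so that the finite‑difference identity is legitimately applicable), and the trivial bookkeeping $n-1-s\ge 0$, which is exactly the hypothesis $s\le n-1$. If anything, the main temptation to resist is deriving the identity the way the rest of the paper proceeds, by equating the entries of a suitable $2\times 2$ matrix power through Theorem \ref{t1} and Williams' formula \eqref{eq1} --- here a direct coefficient comparison is shorter and cleaner.
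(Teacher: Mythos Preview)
Your proof is correct but takes a genuinely different route from the paper's. The paper applies Theorem~\ref{t1} to the diagonal matrix with entries $i$ and $j$, expands the resulting expression for the $(1,1)$ entry $i^{n}$ by writing out powers of $(i+j)$, re-indexes, and then invokes the auxiliary identity
\[
\binom{n-k}{k}\binom{n-2k}{s-k}-\binom{n-1-k}{k}\binom{n-1-2k}{s-k}=\binom{s}{k}\binom{n-k-1}{s-1}
\]
before comparing coefficients of $i^{s}j^{n-s}$. Your argument bypasses the matrix machinery entirely: recognising the sum as $(-1)^{s}$ times the $s$-th forward difference of a polynomial of degree $s-1$ (or, equivalently, extracting the coefficient of $z^{s-1}$ from $z^{s}(1+z)^{n-1-s}$) is shorter and more elementary, and the separate handling of $s=0$ together with the check that the polynomial and combinatorial binomial coefficients agree on $0\le k\le s$ is all the bookkeeping needed. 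The trade-off is thematic rather than mathematical: the paper's proof serves its expository purpose of showing that Theorem~\ref{t1} manufactures combinatorial identities, while yours demonstrates that this particular identity needs no such apparatus.
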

\begin{proof}
We consider
$\left (
\begin{matrix}
i & 0  \\
0   & j
\end{matrix}
\right )^{n}$ and use Theorem \ref{t1} to get that
{\allowdisplaybreaks
\begin{equation*}
i^{n}=\sum_{k=0}^{\lfloor n/2 \rfloor}\binom{n-k}{k}(i+j)^{n-2
k}(-i\,j)^{k}\\
-j\sum_{k=0}^{\lfloor (n-1)/2 \rfloor}\binom{n-1-k}{k}(i+j)^{n-1-2
k}(-i\,j)^{k}.
\end{equation*}
}
Upon expanding powers of $i+j$ and re-indexing, we get that
\begin{multline}\label{eqlas1}
i^{n}=i^{n} + \sum_{s=0}^{n-1}i^{s}j^{n-s}
\sum_{k}\left \{
\binom{n-k}{k}\binom{n-2k}{s-k} - \binom{n-1-k}{k}\binom{n-1-2k}{s-k}
\right \}(-1)^{k}.
\end{multline}
Finally, we note that
\[
\binom{n-k}{k}\binom{n-2k}{s-k} - \binom{n-1-k}{k}\binom{n-1-2k}{s-k}
=\binom{s}{k}\binom{n-k-1}{s-1}
\]
and compare coefficients of $i^{s}j^{n-s}$ on both sides of \eqref{eqlas1}
to get the result.
\end{proof}

\vskip 30pt

\section{concluding remarks}
Theorem \ref{t1} can be applied in a number of other ways. The
identity
\[
\left (
\begin{matrix}
\cos{n \theta} & \sin{n\theta}  \\
-\sin{n \theta}   & \cos{n \theta}
\end{matrix}
\right )=\left (
\begin{matrix}
\cos{ \theta} & \sin{\theta}  \\
-\sin{ \theta}   & \cos{ \theta}
\end{matrix}
\right )^{n}
\]
will give alternative (to those derived from De Moivre's Theorem) expressions
for $\cos{n \theta}$ and $\sin{n \theta}$.

If $(x_{n},\,y_{n})$ denotes the $n$-th largest pair of positive integers satisfying
the Pell equation $x^{2} - m y^{2}=1$, the identity
\[
\left (
\begin{matrix}
x_{n} &  m\,y_{n} \\
y_{n}   & x_{n}
\end{matrix}
\right )=\left (
\begin{matrix}
x_{1} &  m\,y_{1} \\
y_{1}   & x_{1}
\end{matrix}
\right )^{n}
\]
will give alternative expressions for $x_{n}$ and $y_{n}$ to those derived from
the formula $x_{n}+\sqrt{m}y_{n}=(x_{1}+\sqrt{m}y_{1})^{n}$.

Similarly, one can use Theorem \ref{t1} to
find various identities for the Brahmagupta polynomials \cite{S96}
$x_{n}$
and $y_{n}$  defined by
\[
\left (
\begin{matrix}
x_{n} &  \,y_{n} \\
t\, y_{n}   & x_{n}
\end{matrix}
\right )=\left (
\begin{matrix}
x_{1} &  y_{1} \\
t\,y_{1}   & x_{1}
\end{matrix}
\right )^{n}
\]
and the Morgan-Voyce polynomials  \cite{MV59} $B_{n}$ and $b_{n}$
defined by
\begin{align*}
\left (
\begin{matrix}
B_{n} &  -B_{n-1} \\
B_{n-1}   &- B_{n-2}
\end{matrix}
\right )=
\left (
\begin{matrix}
x+2 &  -1 \\
1   & 0
\end{matrix}
\right )^{n}
\end{align*}
and $b_{n}=B_{n}-B_{n-1}$.

It may be possible to extend some of the methods used in this paper
to develop new combinatorial identities. For example, it may be possible
to exploit the equation
\[
A=(m_{1}A+w_{1}I)+(m_{2}A+w_{2}I)+((1-m_{1}-m_{2})A-(w_{1}+w_{2})I)
\]
or, more generally, the equation
\[
A= \sum_{i=1}^{p}(m_{i}A+w_{i}I) + \left(\left (1- \sum_{i=1}^{p}m_{i} \right)A-
\sum_{i=1}^{p}w_{i}I\right )
\]
to find new combinatorial identities in a way similar to the use
 of Equation \ref{eqxa} in the section \emph{Commuting Matrices I}.
We hope to explore some of these ideas in a later paper.

 \allowdisplaybreaks{

}
\end{document}